\documentclass[11pt,eqright]{amsart}
\usepackage{amssymb,amsmath}
\usepackage{bm}
\usepackage{amsfonts}
\usepackage{epsf}
\usepackage{graphicx}
\newtheorem{theorem}{Theorem}[section]
\newtheorem{lemma}[theorem]{Lemma}

\newtheorem{remark}[theorem]{Remark}

\theoremstyle{definition}
\usepackage[titletoc]{appendix}
\usepackage{subfigure}
\numberwithin{equation}{section}

\newcommand{\bc}{\begin{center}}
\newcommand{\ec}{\end{center}}
\newcommand{\be}{\begin{eqnarray}}
\newcommand{\ee}{\end{eqnarray}}
\newcommand{\nn}{\nonumber}
\newcommand{\ben}{\begin{eqnarray*}}
\newcommand{\een}{\end{eqnarray*}}

\newcommand{\Om}{\Omega}

\newcommand{\lam}{\lambda}

\newcommand{\na}{\nabla}

\def\x{\times}

\def\na{\nabla}


\def\cE{\mathcal{E}}

\def\cT{\mathcal{T}}
\def\R{\mathbb{R}}

\def\bsi{\boldsymbol{\sigma}}
\def\bf{\boldsymbol{f}}
\def\bu{\boldsymbol{u}}
\def\bv{\boldsymbol{v}}
\def\btau{\boldsymbol{\tau}}


\def\div{\operatorname{div}}

\DeclareMathOperator{\sspan}{span}

\DeclareMathOperator{\id}{id}
\DeclareMathOperator{\tr}{tr}

\DeclareMathOperator{\dev}{dev}

\newcommand{\reffig}[1]{Figure \ref{#1}}

\textheight=215mm \textwidth=145mm
\evensidemargin=30.0mm
\oddsidemargin=30.0mm
\hoffset=-25.4mm

\title[]
{\small The enriched Crouzeix--Raviart elements are equivalent to  the Raviart--Thomas elements}
\author[J.~Hu]
{Jun Hu$^\ast$}
\address{$^\ast$ LMAM and School of Mathematical Sciences,
 Peking University, Beijing 100871, P. R. China}
\email{hujun@math.pku.edu.cn}
\author[R. Ma]{Rui Ma$^\dagger$}
\address{$^\dagger$ LMAM and School of Mathematical Sciences,
 Peking University, Beijing 100871, P. R. China}
\email{maruipku@gmail.com}
\thanks{The  first author was supported by  the NSFC Project 11271035 and  by  the NSFC Key Project 11031006.}

\keywords{Crouzeix--Raviart element, Enriched Crouzeix--Raviart element, Raviart--Thomas element, the Poisson equation, the Stokes equation, eigenvalue problem.
\\ AMS Subject Classification: 65N30,  65N15, 35J25}
\begin{document}

\newpage
\begin{abstract}
For both the Poisson model problem and the Stokes problem in any dimension,
  this paper proves that  the  enriched  Crouzeix--Raviart elements
 are  actually identical to the first order Raviart--Thomas elements in the sense that they produce the
 same discrete stresses.  This result improves the previous  result in literature which, for two dimensions,  states that
  the piecewise constant projection of the stress by the first order Raviart--Thomas element is equal to
  that by the Crouzeix--Raviart element. For the eigenvalue problem of Laplace operator, this paper proves that the error of the enriched Crouzeix--Raviart element is equivalent to that of the Raviart--Thomas element up to higher order terms.
\end{abstract}

\maketitle

\section{introduction}
The aim of this paper is to prove the enriched Crouzeix--Raviart (ECR hereafter) elements by Hu, Huang and Lin \cite{HuHuangLin2010} are equivalent to the first order Raviart--Thomas elements (RT hereafter). The first main result proves that ECR elements are identical to RT  elements for both the Poisson and Stokes problems in any dimension.
     More precisely, for the Poisson problem imposed a piecewise constant right--hand  function $f$, it is proved that
     \begin{equation}\label{ECR-RT}
     \sigma_{\rm RT}=\na_{\rm NC} u_{\rm ECR} \text{ and }u_{\rm RT}=\Pi_0u_{\rm ECR},
     \end{equation}
     where $u_{\rm ECR}$ and $(\sigma_{\rm RT},u_{\rm RT})$ denote the finite element solutions by the ECR and RT elements, respectively; while for the Stokes problem imposed a piecewise constant right--hand  function $\bf$, it is established that
     \begin{equation}\label{ECR-RT-Pseudo}
     \bsi_{\rm RT}=\na_{\rm NC} \bu_{\rm ECR}+p_{\rm ECR}\id \text{ and }\bu_{\rm RT}=\Pi_0\bu_{\rm ECR}+\mathcal{L}u_{\rm ECR},
     \end{equation}
      where $(\bu_{\rm ECR},p_{\rm ECR})$ and $(\bsi_{\rm RT},\bu_{\rm RT})$ denote the finite element solutions by the ECR and RT elements, respectively. Herein and throughout this paper, $\Pi_0$ denotes the piecewise  constant $L^2$ projection with respect
      to  a shape--regular partition $\cT$ of $\Omega$  consisting of $n$-simplices, and $\mathcal{L}$ is some linear operator.
       The second main result proves that for the eigenvalue problem of Laplace operator
       \begin{equation}\label{ECR-RTeigen}
         \|\nabla_{\rm NC}( u-u_{\rm ECR})\|=\|\nabla u-\sigma_{\rm RT}\|+h.o.t.
       \end{equation}
where the constants involved in the high order term depend on the corresponding eigenvalue. Throughout this paper, $\|v\|$ denotes $(v,v)^{1/2}_{L^2(\Omega)}$, for any $v\in L^2(\Omega)$.  See the next two sections for more details of the notations.

     The history perspective  justifies
      the novelty of both \eqref{ECR-RT} and \eqref{ECR-RT-Pseudo}. For general right--hand function $f$
      \begin{equation}\label{1.3}
        \|\nabla_{\rm NC}(u-u_{ECR})\|=\|\nabla u-\sigma_{\rm RT}\|
      \end{equation}
     hold up to data oscillation. Indeed, it is the first time that the RT  elements
      are proved in such a direct and simple way   to be  identical  to  nonconforming finite elements in any dimension while the previous
      results  state some relations between the two dimensional Crouzeix--Raviart (CR hereafter) and RT elements; see below  and also \cite{Arnold,CarstensenGallistlSchedensack2013,Marini1985}  for  more details.  These results imply that
       the RT element can not be equivalent to the CR element in general, which gives a negative answer to an open problem in \cite{CarstensenPeterseimSchedensack2012}.

      The study on  the relations between nonconforming finite elements and mixed finite elements can
 date back to the pioneer and remarkable work  by Arnold and Brezzi in 1985 \cite{Arnold}. In particular,  for the two dimensional biharmonic equation,
  it was proved  that  the first order  Hellan--Herrmann--Johnson element \cite{Hellan1967,Herrmann1967,Johnson1973}
  is identical to the modified Morley element which differs from the usual Morley element  \cite{Morley68} only by presence
   of the interpolation operator in the right--hand side; while for the two dimensional Poisson problem, it was shown that
    the $L^2$ projection onto the first order RT element space of the stress by
 the CR element, enriched by piecewise cubic bubbles, is identical to the stress by
  the RT element.  By proposing initially a projection finite element method,
    Arbogast and Chen \cite{ArbogastChen1993} generalized successfully the idea of  \cite{Arnold} to most mixed methods
    of  more general  second order elliptic problems in both two and three dimensions. In particular, they showed that
    most mixed methods can be implemented by solving  projected nonconforming methods with
    symmetric positive definite stiff matrixes, and that stresses by mixed methods are $L^2$ projections of those by nonconforming
      methods.  Let $\sigma_{\rm RT}$ be the discrete stress by the RT element, and $u_{\rm CR}$ be
  the displacement by the CR element of the two dimensional Poisson equation.  Suppose that $f$ is a piecewise constant function with  respect to
  $\cT$.  Marini further explored the relation  between the RT and CR elements of \cite{Arnold} to derive the following relation \cite{Marini1985}:
  \begin{equation}\label{Marini}
  \sigma_{\rm RT}|_K=\nabla u_{\rm CR}|_K-\frac{f_K}{2}(x-{\rm mid }(K))\quad  x\in K \text{ for any }K\in\cT,
  \end{equation}
  where $f_K:=f|_K$ denotes the  restriction on $K$ of $f$ and ${\rm mid}(K)$ denotes the centroid of $K$. This important identity was exploited by Brenner  \cite{Brenner1999} to
  design an optimal multigrid method for the RT element,  and by Carstensen and Hoppe to
  establish, for  the first time,  quasi--orthogonality and consequently convergence of both the adaptive RT and CR elements in \cite{CarstensenHoppe2006MathComp} and
  \cite{CarstensenHoppe2006NumerMath}, respectively.  For the two dimensional Stokes equation, a similar identity was first accomplished
  in \cite{CarstensenGallistlSchedensack2013}:
  \begin{equation}\label{CGS}
  \begin{split}
  &\bsi_{\rm RT}|_K=\nabla \bu_{\rm CR}-\frac{\bf_K}{2}\otimes(x-{\rm mid}(K))+p_{\rm CR}|_K\id,\\
  &\bu_{\rm RT}|_K=\Pi_0\bu_{\rm CR}+\frac{1}{4}\Pi_0(\dev(\bf_K\otimes(x-{\rm mid}(K))))(x-{\rm mid}(K)),
  \end{split}
  \end{equation}
  $x\in K$ for any $K\in\cT$. Here $(\bu_{\rm CR}, p_{\rm CR})$ and $(\bsi_{\rm RT}, \bu_{\rm RT})$ are finite element
   solutions by the CR and RT  elements, respectively, and $\bf_K$ is the restriction on $K$
   of the piecewise constant function $\bf$.
     Given two vectors $a\in \R^2$ and $b\in \R^2$, $a\otimes b: =ab^{T}$ defines a $2\times 2$  matrix
    of rank one.   See also \cite{HuXu2007} for a similar relation  between the CR and RT elements for the two dimensional Stokes--like
    problems.
    Such a beautiful identity is also used to  prove convergence and optimality of
    the adaptive pseudostress method in \cite{CarstensenGallistlSchedensack2013}.

  There is another direction for the study on the relations between  nonconforming finite  elements and mixed finite elements, which may
   start with the remarkable work by Braess \cite{Braess2009}. A  recent paper on the two dimensional Poisson model problem due to  Carstensen,   Peterseim,
   and Schedensack  \cite{CarstensenPeterseimSchedensack2012} states more general and profound comparison results of mixed, nonconforming and conforming finite element methods
   \begin{equation}
   \|\na u-\sigma_{\rm RT}\|\leq C \|\na_{\rm NC}(u-u_{\rm CR})\| \approx \|\na (u-u_{\rm C})\|,
   \end{equation}
   hold up to data oscillation and up to  mesh-size independent generic multiplicative constants,
      where $C$ is a generic constant independent of the meshsize,  and $u_{\rm C}$ is the finite  element solution by the
    conforming Courant element. See \cite{Gudi(2010),MaoShi(2010)} for some relevant results in this direction. By a numerical counterexample,
     it was also demonstrated in  \cite{CarstensenPeterseimSchedensack2012} that the converse estimate
     \begin{equation}\label{eqRE}
     \|\na_{\rm NC}(u-u_{\rm CR})\|\leq C\|\na u-\sigma_{\rm RT}\|
     \end{equation}
     does not hold. In Subsection 3.3, we give an example where the right hand side of the above inequality vanishes while the left hand side
      is nonzero, which implies that the converse of \eqref{eqRE} is not valid.

This paper is organised as follows.
Section 2 presents the Poisson equation, Stokes equation and their mixed formulations. This section also introduces the ECR and RT elements. Section 3 proves the equivalence between the ECR and RT elements for the Poisson equation and Stokes equation respectively. Section 4  proves the equivalence between the ECR and RT elements for the eigenvalue problem of  Laplace operator. Section 5 shows some numerical results by ECR elements. In the end, the appendix presents the basis functions and convergence analysis of ECR elements.

       \section{Poisson equation, stokes equation and nonconforming finite element methods}
 We present the Poisson equation, stokes equation and their nonconforming finite element methods in this section.  Throughout this paper, let $\Omega\subset \R^n$ denote a bounded domain, which, for the sake of simplicity, we suppose to be a polytope.
       \subsection{The Poisson equation}
Given $f\in L^2(\Om, \R)$,  the Poisson model  problem
finds $u\in H^1_0(\Om, \R)$ such that
\begin{equation}\label{Poisson}
(\na u, \na v)=(f,v)\quad \text{ for all }v\in H^1_0(\Omega, \R).
\end{equation}
By introducing an auxiliary variable $\sigma:=\nabla u$,  the problem can be formulated as the following equivalent
mixed problem which seeks $(\sigma, u)\in H(\div,\Omega,\R^n)\times L^2(\Omega, \R)$ such that
 \begin{equation}\label{MixedPoisson}
\begin{split}
 &(\sigma, \tau)+(u, \div \tau)=0\quad \text{ for any }\tau\in H(\div,\Omega, \R^n),\\
 &(\div \sigma, v)=(-f, v)\quad \text{ for any }v\in L^2(\Omega, \R).
\end{split}
 \end{equation}

 \subsection{The stokes equation} Given $\bf\in L^2(\Om, \R^n)$,
  the Stokes problem models the motion of incompressible fluids occupying $\Omega$
   which finds $(\bu, p)\in H_0^1(\Om, \R^n)\x L_0^2(\Om, \R):=\{q\in L^2(\Om, \R), \int_{\Om}qdx=0\}$ such that
\begin{equation}\label{stokes}
\begin{split}
&(\na \bu,\na \bv)+(\div\bv,p)=(\bf,\bv)\quad \text{ for any }\bv\in H_0^1(\Om, \R^n)\,,\\
&(\div\bu, q)=0 \text{ for any }q\in L_0^2(\Om, \R).
\end{split}
\end{equation}
where $\bu$ and $p$ are the velocity and pressure of the flow, respectively.   Given any $ n\times n$ matrix--valued function $\btau$,
  its  divergence $\div\btau $ is defined as
  $$
  \div\btau:=\begin{pmatrix}
  \sum\limits_{j=1}^n\frac{\partial \tau_{1,j}}{\partial x_j}\\
  \vdots\\
  \sum\limits_{j=1}^n\frac{\partial \tau_{k,j}}{\partial x_j}\\
  \vdots\\
  \sum\limits_{j=1}^n\frac{\partial \tau_{n,j}}{\partial x_j}
  \end{pmatrix},
  $$
 while its trace  reads
 $$
 \tr \btau:=\sum\limits_{i=1}^n\tau_{ii}.
 $$
 Let $\id\in \R^{n\times n}$ be the $n\times n $ identity matrix. This allows to define the deviatoric part of $\btau$ as
 $$
 \dev \btau:=\btau-1/n\  \tr(\btau)\id.
 $$
The definition of the pseudostress $\bsi:=\na \bu+p\id$ yields the equivalent pseudostress formulation \cite{ArnoldFalk1988,CaiLeeWang2004,CaiWang,CarstensenGallistlSchedensack2013,CarstensenKimPark2011,HuXu2007}:
 Find $(\bsi, \bu)\in \hat{H}(\div, \Om, \R^{n\times n})\x L^2(\Om, \R^n)$ such that
\begin{equation}\label{Mixedstokes}
\begin{split}
&(\dev  \bsi, \dev\btau )+(\bu, \div\btau)=0 \quad \text{ for any }\btau \in \hat{H}(\div, \Om, \R^{n\times n})\,,\\
&(\div\bsi, \bv )=-(\bf, \bv) \text{ for any }\bv\in L^2(\Om, \R^n).
\end{split}
\end{equation}
Herein and throughout this paper, the space $\hat{H}(\div, \Om, \R^{n\times n})$ is defined as
$$
\hat{H}(\div, \Om, \R^{n\times n}):=\{\btau\in H(\div, \Om, \R^{n\times n}):\int_{\Omega}\tr \btau dx=0\}.
$$
\subsection{Triangulations}
 The simplest nonconforming finite elements for both Problem \eqref{Poisson} and Problem \eqref{stokes}
  are  the CR elements proposed in \cite{CrouzeixRaviart(1973)}
 while the simplest mixed finite elements for Problem \eqref{MixedPoisson} and Problem \eqref{Mixedstokes} are
  the first order RT element due to \cite{RaviartThomas1977} and \cite{ArnoldFalk1988,CaiLeeWang2004,CaiWang,CarstensenGallistlSchedensack2013,CarstensenKimPark2011,HuXu2007}, respectively. Suppose that $\overline{\Omega}$ is covered exactly by shape--regular partitions $\cT$  consisting of $n$-simplices  in
$n$ dimensions.  Let  $\cE$ denote the set of  all
$n-1$ dimensional subsimplices of $\cT$,   and  $\cE(\Omega)$ denote the set of
all the $n-1$ dimensional interior subsimplices, and  $\cE(\partial \Omega)$ denote  the set of
all the $n-1$ dimensional boundary subsimplices.  Given $E\in \cE$, let $\nu_E$ be  unit normal vector
and  $[\cdot]$ be jumps of piecewise functions over $E$, namely
$$
[v]:=v|_{K^+}-v|_{K^-}
$$
for piecewise functions $v$ and any two elements $K^+$ and $K^-$ which share the common sub-simplice $E$. Note that
$[\cdot]$  becomes  traces of functions  on $E$ for  boundary  sub-simplices
$E$.

\subsection{The enriched Crouzeix--Raviart elements for both  the Poisson and Stokes equations}
 Given $\omega\subset\Omega$ and an integer $m\geq 0$, let $P_m(\omega)$ denote the space of polynomials of degree $\leq m$ over $\omega$.
 The Crouzeix-Raviart element space $V_{\rm CR}$ over $\cT$ is defined as
\begin{equation}
 V_{\rm CR}:=\begin{array}[t]{l}\big\{v\in L^2(\Om,\R):
 v|_{K}\in P_1(K) \text{ for each }K\in \cT,
 \int_E[v]dE=0,\\
 \text{ for all } E\in\cE(\Omega)\,,
 \text{ and }\int_E vdE=0 \text{ for all   $E\in\cE(\partial\Omega)$ }
 \big\}\,.
 \end{array}\nn
\end{equation}
To obtain a nonconforming finite element method that is able to  produce  lower bounds
 of eigenvalues of  second order elliptic operators,  it was proposed in \cite{HuHuangLin2010}
 to enrich the shape function space $P_1(K)$ by $\sspan\{\sum\limits_{i=1}^nx_i^2\}$ on each
element. This leads to the following shape function space
 \begin{equation}\label{ECRfunctionspace}
{\rm ECR}(K):=P_1(K)+\sspan\Big\{\sum\limits_{i=1}^nx_i^2\Big\}\quad \text{ for any
}K\in\cT.
 \end{equation}
  The enriched
 Crouzeix-Raviart element space $V_{\rm ECR}$ is then defined by
\begin{equation}\label{ECR}
 V_{\rm ECR}:=\begin{array}[t]{l}\big\{v\in L^2(\Om,\R):
 v|_{K}\in {\rm ECR}(K) \text{ for each }K\in \cT,
 \int_E[v]dE=0,\\
 \text{ for all } E\in\cE(\Omega)\,,
 \text{ and }\int_E vdE=0 \text{ for all   $E\in\cE(\partial\Omega)$ }
 \big\}\,.
 \end{array}\nn
\end{equation}
The enriched  Crouzeix--Raviart element method of Problem \eqref{Poisson} finds  $u_{\rm ECR}\in V_{\rm ECR}$ such that
\begin{equation}\label{ECRPoisson}
(\na_{\rm NC} u_{\rm ECR}, \na_{\rm NC} v)=(f,v)\text{ for all }v\in V_{\rm ECR}.
\end{equation}

In order to construct a stable finite element method for the Stokes problem, we propose the following
finite element space for the pressure
\begin{equation}
Q_{\rm ECR}:=\{q\in L^2_0(\Omega, \R), q|_K\in P_0(K)\text{ for each }K\in\cT\}.
\end{equation}
The enriched Crouzeix--Raviart element method of Problem \eqref{stokes}
 seeks $(\bu_{\rm ECR}, p_{\rm ECR})\in (V_{\rm ECR})^n \x Q_{\rm ECR}$ such that
\begin{equation}\label{ECRstokes}
\begin{split}
&(\na_{\rm NC} \bu_{\rm ECR},\na_{\rm NC} \bv)+(\div_{\rm NC}\bv,p_{\rm ECR})=(\bf,\bv)\quad \text{ for any }\bv\in (V_{\rm ECR})^n \,,\\
&(\div_{\rm NC}\bu_{\rm ECR}, q)=0 \text{ for any }q\in Q_{\rm ECR}.
\end{split}
\end{equation}
Since $V_{\rm CR}\subset V_{\rm ECR}$, the well-posedness of Problem \eqref{ECRstokes} follows immediately from that for the
Crouzeix--Raviart element method of Problem \eqref{stokes}, see \cite{CrouzeixRaviart(1973)} for more details.

\subsection{The Raviart--Thomas  elements for both the Poisson and Stokes equations}
For the Poisson equation, one famous mixed finite element is the first order Raviart--Thomas element whose shape function
 space is
 $$
 {\rm RT}(K):=(P_0(K))^n+x P_0(K) \text{ for any }K\in\cT.
 $$
 Then the corresponding global finite element space reads
 \begin{equation}
 {\rm RT}(\cT):=\{\tau\in H(\div, \Om, \R^n): \tau|_K\in {\rm RT}(K) \text{ for any }K\in\cT\}.
 \end{equation}
  To get a stable pair of space,  the piecewise constant  space is proposed to
  approximate the displacement, namely,
  \begin{equation}
  U_{\rm  RT}:=\{v\in L^2(\Om, \R): v|_K\in P_0(K)\text{ for any }K\in\cT\}.
  \end{equation}
  The Raviart--Thomas element method of Problem \eqref{MixedPoisson} seeks $(\sigma_{\rm RT}, u_{\rm RT})
  \in {\rm RT}(\cT)\times  U_{\rm  RT}$ such that
 \begin{equation}\label{DiscreteMixedPoisson}
\begin{split}
 &(\sigma_{\rm RT}, \tau)+(u_{\rm RT}, \div \tau)=0\quad \text{ for any }\tau\in {\rm RT}(\cT),\\
 &(\div \sigma_{\rm RT}, v)=(-f, v)\quad \text{ for any }v\in U_{\rm  RT}.
\end{split}
 \end{equation}
 Define
 \begin{equation}
 (\widehat{{\rm RT}}(\cT))^n:=({\rm RT}(\cT))^n \cap  \hat{H}(\div, \Om, \R^{n\times n}).
 \end{equation}
 The Raviart--Thomas element method of Problem \eqref{Mixedstokes} searches  for
 $(\bsi_{\rm RT}, \bu_{\rm RT})\in  (\widehat{{\rm RT}}(\cT))^n\times (U_{\rm  RT})^n$ such that
\begin{equation}\label{DiscreteMixedstokes}
\begin{split}
&(\dev  \bsi_{\rm RT}, \dev\btau )+(\bu_{\rm RT}, \div\btau)=0 \quad \text{ for any }\btau \in (\widehat{{\rm RT}}(\cT))^n\,,\\
&(\div\bsi_{\rm RT}, \bv )=-(\bf, \bv) \text{ for any }\bv\in (U_{\rm  RT})^n.
\end{split}
\end{equation}

\section{Equivalence between the ECR and RT elements}
In this section we assume that both $f$ and $\bf$ are piecewise constant with respect to $\cT$.
\subsection{Equivalence between the ECR and RT elements for the Poisson equation}  Given any $K\in\cT$, let $E_i\,, i=1, 2, \cdots, n+1$,  be its $n-1$ dimensional
 sub-simplices.  Let $\phi_i, i=1, 2, \cdots, n+1$, and $\phi_K$ be basis functions of the shape function space ${\rm ECR}(K)$,
  so that
  \begin{equation}\label{basisfunction}
  \begin{split}
 & \int_{E_i}\phi_jdE=\delta_{i,j}:=\left\{\begin{array}{ll} 1 &\text{ if } i=j\\ 0 & \text{ otherwise}\end{array}\right., \text{ and }\int_K\phi_j dx=0,  i, j=1, \cdots, n+1,\\
 &\int_K \phi_Kdx=1, \text{ and }\int_{E_i}\phi_KdE=0,  i=1, \cdots, n+1.
  \end{split}
  \end{equation}
 See the appendix for the specific expressions.
  \begin{lemma}\label{HDIVProperty} Let $u_{\rm ECR}$  be the solution of Problem \eqref{ECRPoisson}. There holds that
   $$
   \na_{\rm NC}u_{\rm ECR}\in H(\div, \Om, \R^n).
   $$
  \end{lemma}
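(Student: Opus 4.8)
The plan is to show that the elementwise gradient $\sigma:=\na_{\rm NC}u_{\rm ECR}$ has continuous normal component across every interior face, which for a piecewise polynomial vector field is equivalent to membership in $H(\div,\Om,\R^n)$. The point of departure is a local observation: since $u_{\rm ECR}|_K\in P_1(K)+\sspan\{\sum_i x_i^2\}$, we may write $u_{\rm ECR}|_K=a_K+b_K\cdot x+c_K\sum_i x_i^2$, so that $\sigma|_K=b_K+2c_K x\in {\rm RT}(K)$ and $\Delta u_{\rm ECR}|_K=2nc_K$ is constant on $K$. In particular, on any face $E$ the normal component $\sigma\cdot\nu_E=b_K\cdot\nu_E+2c_K\,(x\cdot\nu_E)$ is \emph{constant}, because $x\cdot\nu_E$ is constant along the hyperplane containing $E$; this Raviart--Thomas property of $\sigma|_K$ will be used repeatedly.

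Next I would integrate by parts elementwise in \eqref{ECRPoisson}. For $v\in V_{\rm ECR}$, summing $\int_K\sigma\cdot\na v\,dx=-\int_K(\Delta u_{\rm ECR})v\,dx+\int_{\pa K}(\sigma\cdot\nu)v\,ds$ over $K\in\cT$ and regrouping the boundary integrals over faces via the product rule $[(\sigma\cdot\nu_E)v]=[\sigma\cdot\nu_E]\{v\}+\{\sigma\cdot\nu_E\}[v]$ (with $\{\cdot\}$ the two-sided average) produces, besides the interior normal-jump sum, two families of terms: $\int_E\{\sigma\cdot\nu_E\}[v]\,ds$ on interior faces and $\int_E(\sigma\cdot\nu_E)v\,ds$ on boundary faces. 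Here the constancy of $\sigma\cdot\nu_E$ on each $E$ lets me pull $\{\sigma\cdot\nu_E\}$ and $\sigma\cdot\nu_E$ out of these integrals, so the ECR moment conditions $\int_E[v]\,ds=0$ (interior) and $\int_E v\,ds=0$ (boundary) annihilate both families. What survives is
\begin{equation*}
(f,v)=-(\Delta_{\rm NC}u_{\rm ECR},v)+\sum_{E\in\cE(\Om)}\int_E[\sigma\cdot\nu_E]\,\{v\}\,ds\qquad\text{for all }v\in V_{\rm ECR}.
\end{equation*}

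To remove the volume term I would test with the element bubbles $\phi_K$. Since $\int_E\phi_K\,ds=0$ on every face and $[\sigma\cdot\nu_E]$ is constant, the face sum vanishes for $v=\phi_K$, leaving $(f,\phi_K)=-(\Delta u_{\rm ECR},\phi_K)_K$; as $f$ and $\Delta u_{\rm ECR}$ are constant on $K$ and $\int_K\phi_K\,dx=1$, this yields $\Delta u_{\rm ECR}|_K=-f_K$, i.e. $-\Delta_{\rm NC}u_{\rm ECR}=f$ elementwise. Substituting this back cancels the volume term against $(f,v)$ and collapses the identity to $\sum_{E\in\cE(\Om)}\int_E[\sigma\cdot\nu_E]\{v\}\,ds=0$ for all $v\in V_{\rm ECR}$. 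Finally, fixing an interior face $E$ and testing with the global $V_{\rm ECR}$ basis function supported on the two simplices sharing $E$ whose face moments satisfy $\int_{E'}\{v\}\,ds=\delta_{E,E'}$ (assembled from the local $\phi_i$ of \eqref{basisfunction}), the constancy of each $[\sigma\cdot\nu_{E'}]$ reduces the sum to $[\sigma\cdot\nu_E]\int_E\{v\}\,ds=[\sigma\cdot\nu_E]=0$. Since $E$ is arbitrary, $\sigma$ has continuous normal components across all interior faces, whence $\na_{\rm NC}u_{\rm ECR}\in H(\div,\Om,\R^n)$. The only genuinely delicate step is the integration-by-parts bookkeeping—keeping the orientation of $\nu_E$ consistent and checking that the jump/average splitting together with the two ECR moment conditions really eliminates every contribution except the normal-jump sum; the remaining arguments follow directly from the ${\rm RT}$-type structure of $\sigma|_K$.
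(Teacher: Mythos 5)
Your proof is correct, but it follows a genuinely different route from the paper's. The paper argues by comparison with the discrete RT solution: testing the RT scheme with ECR functions gives $(\sigma_{\rm RT},\na_{\rm NC}v)=(f,v)$, hence $(\na_{\rm NC}u_{\rm ECR}-\sigma_{\rm RT},\na_{\rm NC}v)=0$ for all $v\in V_{\rm ECR}$; then the face function $v_E$ (whose element means vanish, so the piecewise--constant divergence term drops out) forces $[\na_{\rm NC}u_{\rm ECR}-\sigma_{\rm RT}]\cdot\nu_E=0$, and the conclusion follows because $\sigma_{\rm RT}$ is already in $H(\div,\Om,\R^n)$. You never invoke the RT problem: you first obtain the strong elementwise identity $-\Delta u_{\rm ECR}|_K=f_K$ by testing with the bubbles $\phi_K$, use it to cancel the volume terms exactly, and then annihilate each jump $[\na_{\rm NC}u_{\rm ECR}\cdot\nu_E]$ by testing with the face basis function attached to $E$. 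Both arguments rest on the same structural facts---$\na u_{\rm ECR}|_K\in{\rm RT}(K)$, so one--sided normal traces are constant on faces, and the ECR moment conditions kill the average and boundary face terms---and both need $f$ piecewise constant. What your route buys: it is self--contained (no appeal to the existence, uniqueness, or $H(\div)$--conformity of $\sigma_{\rm RT}$), it exhibits the conformity as an intrinsic property of the ECR scheme, and your bubble computation is exactly the one the paper postpones to the equivalence theorem (Theorem 3.3) to prove $\div\na_{\rm NC}u_{\rm ECR}=-f_K$, so you have front--loaded part of that later work. What the paper's route buys: the volume term costs only the orthogonality $\int_K v_E\,dx=0$ rather than a separate strong--form derivation, and the identity $(\na_{\rm NC}u_{\rm ECR}-\sigma_{\rm RT},\na_{\rm NC}v)=0$ is precisely the relation the equivalence theorem then exploits. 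One detail worth writing out in your last step: the face function assembled from the local $\phi_i$ of \eqref{basisfunction} genuinely belongs to $V_{\rm ECR}$ (its jump across $E$ has integral $1-1=0$, and on every other face the single nonzero trace has zero mean), which is what entitles you to use it as a test function.
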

  \begin{remark} Since $u_{\rm ECR}$ is  nonconforming in the sense that $u_{\rm ECR}\not\in H^1(\Omega, \R)$, it is remarkable that
   $\na_{\rm NC}u_{\rm ECR}$ is $H(\div)$ conforming.
  \end{remark}
  \begin{proof}  Let  $(\sigma_{\rm RT}, u_{\rm RT})$ the solution of Problem \eqref{DiscreteMixedPoisson}.
  Since  $\sigma_{\rm RT}\cdot\nu_E$ are a constant and $\int_E [v]dE=0$ for any $E\in\cE$ and $v\in V_{\rm ECR}$, an integration by parts
 plus  the second equation of \eqref{DiscreteMixedPoisson}  yield
  \begin{equation*}
  (\sigma_{\rm RT}, \na_{\rm NC}v)=\sum\limits_{E\in \cE}\int_E \sigma_{\rm RT}\cdot\nu_E v dE-(\div\sigma_{\rm RT},v) =(f,v).
  \end{equation*}
  This and \eqref{ECRPoisson} lead to
  \begin{equation}
  (\na_{\rm NC}u_{\rm ECR}-\sigma_{\rm RT}, \na_{\rm NC}v)=0 \quad \text{for any } v\in V_{\rm ECR}.
  \end{equation}
  Given $E\in\cE(\Omega)$, let $ v_E\in V_{ECR}$ such that
  $$
  \int_E v_EdE=1, \int_{E^\prime} v_E dE=0 \text{ for any }E^\prime\text{ other than }E, \text{ and }\int_K vdx=0  \text{ for all } K\in\cT.
  $$
  Since $x\cdot\nu_E$ is a constant on $E$, $\na_{\rm NC}u_{\rm ECR}\cdot\nu_E$ is a constant on $E$. Since $\div_{\rm NC}(\na_{\rm NC}u_{\rm ECR}-\sigma_{\rm RT})$ is a piecewise constant function, since both  the average
  $\big\{(\na_{\rm NC}u_{\rm ECR}-\sigma_{\rm RT})\big\}\cdot\nu_E$ and the jump $[(\na_{\rm NC}u_{\rm ECR}-\sigma_{\rm RT})]\cdot\nu_E$
   are a constant on $E$,  an integration by parts derives
  \begin{equation*}
  \begin{split}
  0&=(\na_{\rm NC}u_{\rm ECR}-\sigma_{\rm RT}, \na_{\rm NC}v_E)\\
  &=[(\na_{\rm NC}u_{\rm ECR}-\sigma_{\rm RT})]\cdot\nu_E \int_E v_E dE+\big\{(\na_{\rm NC}u_{\rm NC}-\sigma_{\rm RT})\big\}\cdot\nu_E \int_E [v_E] dE\\
  &=[(\na_{\rm NC}u_{\rm ECR}-\sigma_{\rm RT})]\cdot\nu_E.
   \end{split}
   \end{equation*}
  Hence $\na_{\rm NC}u_{\rm ECR}\in H(\div, \Om, \R^n)$, which completes the proof.
 \end{proof}
 \begin{theorem} Let $u_{\rm ECR}$ and $(\sigma_{\rm RT}, u_{\rm RT})$ be the solutions of problems \eqref{ECRPoisson}
  and \eqref{DiscreteMixedPoisson}, respectively. Then there holds
  $$
\sigma_{\rm RT}=\na_{\rm NC}u_{\rm ECR} \text{ and }  u_{\rm RT}=\Pi_0 u_{\rm ECR},
  $$
  where $\Pi_0$ is the piecewise constant $L^2$ projection operator.
 \end{theorem}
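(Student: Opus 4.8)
The plan is to exhibit the pair $(\na_{\rm NC}u_{\rm ECR},\Pi_0u_{\rm ECR})$ as a solution of the discrete mixed problem \eqref{DiscreteMixedPoisson} and then appeal to the uniqueness of its solution. First I would verify that $\na_{\rm NC}u_{\rm ECR}$ is an admissible RT stress, i.e.\ $\na_{\rm NC}u_{\rm ECR}\in{\rm RT}(\cT)$. The $H(\div,\Om,\R^n)$-conformity is already supplied by Lemma \ref{HDIVProperty}, so only the local membership $\na u_{\rm ECR}|_K\in{\rm RT}(K)$ remains. Writing $u_{\rm ECR}|_K=p+c_K\sum_{i=1}^n x_i^2$ with $p\in P_1(K)$, one has $\na p\in(P_0(K))^n$ and $\na\big(\sum_{i=1}^n x_i^2\big)=2x\in xP_0(K)$, so indeed $\na u_{\rm ECR}|_K\in(P_0(K))^n+xP_0(K)={\rm RT}(K)$.

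Next I would check the two equations of \eqref{DiscreteMixedPoisson} for this candidate pair. For the first equation, fix $\tau\in{\rm RT}(\cT)$ and integrate by parts elementwise,
$$(\na_{\rm NC}u_{\rm ECR},\tau)=-\sum_{K\in\cT}\int_K u_{\rm ECR}\,\div\tau\dx+\sum_{E\in\cE}\int_E(\tau\cdot\nu_E)[u_{\rm ECR}]\ds.$$
Since $\tau\in H(\div)$ its normal component is single valued, and since $x\cdot\nu_E$ is constant on each face the quantity $\tau\cdot\nu_E$ is constant on $E$; together with $\int_E[u_{\rm ECR}]\ds=0$ from the definition of $V_{\rm ECR}$, the interface sum vanishes. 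As $\div\tau$ is piecewise constant I may replace $u_{\rm ECR}$ by $\Pi_0u_{\rm ECR}$, which gives $(\na_{\rm NC}u_{\rm ECR},\tau)+(\Pi_0u_{\rm ECR},\div\tau)=0$, exactly the first equation. For the second equation I would prove the stronger elementwise identity $\div_{\rm NC}\na_{\rm NC}u_{\rm ECR}=-f$. Testing \eqref{ECRPoisson} with the bubble $\phi_K$ from \eqref{basisfunction} (extended by zero, which lies in $V_{\rm ECR}$ because $\int_{E_i}\phi_K\ds=0$ on every face of $K$) and integrating by parts—killing the interface terms again via $\int_E[\phi_K]\ds=0$—yields $f_K=-\int_K\div(\na u_{\rm ECR})\,\phi_K\dx$. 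Since $\div(\na u_{\rm ECR})|_K=\div(2c_K x)=2nc_K$ is constant and $\int_K\phi_K\dx=1$, this forces $\div(\na u_{\rm ECR})|_K=-f_K$, so $(\div\na_{\rm NC}u_{\rm ECR},v)=(-f,v)$ for every $v\in U_{\rm RT}$.

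Having shown that $(\na_{\rm NC}u_{\rm ECR},\Pi_0u_{\rm ECR})$ satisfies both equations of \eqref{DiscreteMixedPoisson}, the well-posedness of the RT mixed method forces $\sigma_{\rm RT}=\na_{\rm NC}u_{\rm ECR}$ and $u_{\rm RT}=\Pi_0u_{\rm ECR}$, which is the assertion. I expect the main obstacle to be the second equation rather than the first: the relation $\div\sigma_{\rm RT}=-f$ has to be promoted from a weak statement against $V_{\rm ECR}$ to a genuine elementwise equality, and this hinges on correctly identifying the extended bubble $\phi_K$ as a legitimate global test function together with the computation $\div(2c_K x)=2nc_K$. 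The first equation, by contrast, is a routine elementwise integration by parts once the constancy of $\tau\cdot\nu_E$ on faces and the vanishing of the jump averages are in hand.
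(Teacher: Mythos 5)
Your proposal is correct and follows essentially the same route as the paper: use Lemma \ref{HDIVProperty} for the $H(\div)$-conformity, verify the first equation of \eqref{DiscreteMixedPoisson} by elementwise integration by parts with vanishing jump averages, verify the second by testing \eqref{ECRPoisson} with the bubble $\phi_K$ and using $\int_{E_i}\phi_K\,dE=0$, $\int_K\phi_K\,dx=1$, then conclude by uniqueness of the mixed solution. Your only addition is making explicit the local membership $\na u_{\rm ECR}|_K\in{\rm RT}(K)$ and the computation $\div(\na u_{\rm ECR})|_K=2nc_K$, which the paper leaves implicit.
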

 \begin{proof} By Lemma \ref{HDIVProperty}, we only need to prove that $(\na_{\rm NC}u_{\rm ECR}, \Pi_0 u_{\rm ECR})$
  is the solution of Problem \eqref{DiscreteMixedPoisson}. In fact, given any $\tau\in {\rm RT}(\cT)$,   an integration by parts
  yields
  $$
  (\na_{\rm NC}u_{\rm ECR}, \tau)=-(u_{\rm ECR}, \div \tau)+\sum\limits_{E\in\cE}\int_E [u_{\rm ECR}]\tau\cdot\nu_EdE=-(u_{\rm ECR}, \div \tau).
  $$
 Hence
  $$
  (\na_{\rm NC}u_{\rm ECR}, \tau)+(\Pi_0u_{\rm ECR}, \div \tau)=0,
  $$
  which is the first equation of Problem \eqref{DiscreteMixedPoisson}. To prove the second equation of Problem \eqref{DiscreteMixedPoisson},
   given any $K$, let $v=\phi_K$ in \eqref{ECRPoisson}, an integration by parts leads to
   $$
   (f,\phi_K)=(\na_{\rm NC}u_{\rm ECR}, \na_{\rm NC}\phi_K)
   =-(\div \na_{\rm NC}u_{\rm ECR}, \phi_K)+\sum\limits_{E\subset\partial K}\int_E \na_{\rm NC}u_{\rm ECR}\cdot\nu_E\phi_KdE.
   $$
  Since $\na_{\rm NC}u_{\rm ECR}\cdot\nu_E$ is a constant on $E$ and $\int_E\phi_KdE=0$, this yields
  $$
  \div \na_{\rm NC}u_{\rm ECR}(1, \phi_K)=-f_K(1,\phi_K)\Rightarrow \div \na_{\rm NC}u_{\rm ECR}=-f_K,
  $$
  which completes the proof.
 \end{proof}

 \subsection{Equivalence between the ECR and RT elements for the Stokes equation}
   \begin{lemma}\label{HDIVProperty-Stokes} Let $(\bu_{\rm ECR}, p_{\rm ECR})$  be the solution of Problem \eqref{ECRstokes}. There holds that
   $$
   \na_{\rm NC}\bu_{\rm ECR}+p_{\rm ECR}\id\in H(\div, \Om, \R^{n\times n}).
   $$
  \end{lemma}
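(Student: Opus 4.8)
The plan is to mirror the proof of Lemma~\ref{HDIVProperty}: membership in $H(\div, \Om, \R^{n\times n})$ amounts to the vanishing of the jump of the normal component $[\na_{\rm NC}\bu_{\rm ECR}+p_{\rm ECR}\id]\cdot\nu_E$ across every interior subsimplex $E\in\cE(\Omega)$, and it suffices to establish this row by row. Writing $\bu_{\rm ECR}=((u_{\rm ECR})_1,\dots,(u_{\rm ECR})_n)$ and setting $\btau_k:=\na_{\rm NC}(u_{\rm ECR})_k+p_{\rm ECR}e_k$ for the $k$-th row of the matrix field $\na_{\rm NC}\bu_{\rm ECR}+p_{\rm ECR}\id$, one has $\big((\na_{\rm NC}\bu_{\rm ECR}+p_{\rm ECR}\id)\nu_E\big)_k=\btau_k\cdot\nu_E$, so the goal reduces to showing $[\btau_k]\cdot\nu_E=0$ for each $k=1,\dots,n$.

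First I would record two structural facts that make the argument collapse, exactly as in the scalar case. Since $(u_{\rm ECR})_k|_K\in{\rm ECR}(K)=P_1(K)+\sspan\{\sum_{i=1}^n x_i^2\}$, its gradient has the form $\na(u_{\rm ECR})_k=\mathbf{a}+2cx$ with $\mathbf{a}\in\R^n$ and $c\in\R$, whence $\na(u_{\rm ECR})_k\cdot\nu_E=\mathbf{a}\cdot\nu_E+2c\,(x\cdot\nu_E)$ is constant on $E$ because $x\cdot\nu_E$ is; as $p_{\rm ECR}$ is piecewise constant, $\btau_k\cdot\nu_E$ is therefore constant on each side of $E$. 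Likewise $\div\btau_k=\Delta(u_{\rm ECR})_k+\partial_k p_{\rm ECR}$ is piecewise constant, the Laplacian of an ${\rm ECR}$ function being the constant $2nc$ and $p_{\rm ECR}$ having vanishing gradient on each element.

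The key step is to test the first equation of Problem~\eqref{ECRstokes} with $\bv=v_E\,e_k$, where $v_E\in V_{\rm ECR}$ is the edge function from Lemma~\ref{HDIVProperty}, normalized by $\int_E v_E\,dE=1$, $\int_{E'}v_E\,dE=0$ for $E'\neq E$, and $\int_K v_E\,dx=0$ for all $K\in\cT$. Because $\bf$ is piecewise constant and $\int_K v_E\,dx=0$, the right-hand side $(\bf,\bv)=\sum_{K\in\cT}f_k\int_K v_E\,dx$ vanishes, while the two left-hand terms combine into $\sum_{K\in\cT}\int_K\btau_k\cdot\na v_E\,dx$. An element-wise integration by parts rewrites this as $-\sum_{K\in\cT}\int_K(\div\btau_k)\,v_E\,dx+\sum_{E'\in\cE}\int_{E'}\big([\btau_k]\cdot\nu_{E'}\,\{v_E\}+\{\btau_k\}\cdot\nu_{E'}\,[v_E]\big)\,dE$. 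The volume term drops since $\div\btau_k$ is piecewise constant and $\int_K v_E\,dx=0$; in the boundary sum the constancy of $\btau_k\cdot\nu_{E'}$ on each side pulls the factors out of the integrals, so only $E'=E$ survives, contributing $[\btau_k]\cdot\nu_E\int_E\{v_E\}\,dE=[\btau_k]\cdot\nu_E$, whereas the average-times-jump term dies because $\int_E[v_E]\,dE=0$. Hence $[\btau_k]\cdot\nu_E=0$ for every $k$ and every $E\in\cE(\Omega)$, which is the claim.

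I expect the only genuine bookkeeping hurdle to be the vector/matrix indexing—reducing the matrix-valued statement to the $n$ scalar identities and keeping straight that the piecewise constant pressure contributes nothing to the row divergence $\div\btau_k$ yet does contribute the term $p_{\rm ECR}(\nu_E)_k$ to the normal trace $\btau_k\cdot\nu_E$. Everything else is a direct transcription of Lemma~\ref{HDIVProperty}. One could equally route the argument through the Raviart--Thomas pseudostress solution $\bsi_{\rm RT}\in(\widehat{{\rm RT}}(\cT))^n\subset\hat{H}(\div,\Om,\R^{n\times n})$, exploiting its continuous normal component to absorb the right-hand side, but the direct computation above avoids handling the deviatoric bilinear form of Problem~\eqref{DiscreteMixedstokes}.
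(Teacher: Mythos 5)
Your proof is correct, but it takes a genuinely different route from the paper's. The paper proves the lemma by comparison with the discrete Raviart--Thomas solution: it first shows $(\bsi_{\rm RT}, \na_{\rm NC}\bv)=(\bf,\bv)$ for all $\bv\in (V_{\rm ECR})^n$, subtracts this from the first equation of Problem \eqref{ECRstokes} to get the orthogonality $(\na_{\rm NC}\bu_{\rm ECR}+p_{\rm ECR}\id-\bsi_{\rm RT},\na_{\rm NC}\bv)=0$, tests with a face function to conclude that the \emph{difference} has vanishing normal jumps, and finally invokes $\bsi_{\rm RT}\in H(\div,\Om,\R^{n\times n})$. Your argument is the direct one you sketch as an alternative at the end: you never introduce $\bsi_{\rm RT}$, but instead exploit the fact that for the test functions $\bv=v_E e_k$ the right-hand side $(\bf,\bv)$ vanishes outright (piecewise constant $\bf$ against zero element averages of $v_E$), so the ECR equations alone force $[\btau_k]\cdot\nu_E=0$. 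This buys self-containedness -- no appeal to the well-posedness of Problem \eqref{DiscreteMixedstokes} or to properties of RT fields -- at the cost of redoing the jump/average bookkeeping that the paper delegates to the $H(\div)$-conformity of $\bsi_{\rm RT}$; both proofs use the standing assumption that $\bf$ is piecewise constant, just at different places. One further point in your favour: the paper tests with $\bv_E=(v_E,\dots,v_E)^T$, which literally yields only that the \emph{sum} of the components of the jump vector $[\na_{\rm NC}\bu_{\rm ECR}+p_{\rm ECR}\id-\bsi_{\rm RT}]\nu_E$ vanishes; your componentwise choice $\bv=v_E e_k$, $k=1,\dots,n$, is what is actually needed to kill each component, so your version also repairs this small imprecision.
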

  \begin{proof} The proof is actually similar to that of Lemma \ref{HDIVProperty}.
  Let $(\bsi_{\rm RT}, \bu_{\rm RT})$ be the solution of Problem \eqref{DiscreteMixedstokes}. Given any
  $\bv\in (V_{ECR})^n$, it follows from an integration by parts and  the second equation of Problem \eqref{DiscreteMixedstokes}  that
  $$
  (\bsi_{\rm RT}, \na_{\rm NC}\bv)=(\bf, \bv)+\sum\limits_{E\in\cE}\int_{E}\bsi_{\rm RT}\nu_E[\bv] dE=(\bf, \bv).
  $$
  This and the first equation of Problem \eqref{ECRstokes} give
  \begin{equation*}
  (\na_{\rm NC}\bu_{\rm ECR}+p_{\rm ECR}\id-\bsi_{\rm RT}, \na_{\rm NC}\bv)=0\text{ for any }\bv\in (V_{ECR})^n.
  \end{equation*}
  Given any $E\in\cE(\Omega)$,  let $v_E$ be defined as in the proof of  Lemma \ref{HDIVProperty}. Define $\bv_E=(v_E, \cdots, v_E)^T$, this yields
  \begin{equation*}
  \begin{split}
  0=&-(\div_{\rm NC}(\na_{\rm NC}\bu_{\rm ECR}+p_{\rm ECR}\id-\bsi_{\rm RT}), \bv_E)\\
  &+\int_E [\na_{\rm NC}\bu_{\rm ECR}+p_{\rm ECR}\id-\bsi_{\rm RT}]\nu_E\cdot\bv_E dE\\
  =&\int_E [\na_{\rm NC}\bu_{\rm ECR}+p_{\rm ECR}\id-\bsi_{\rm RT}]\nu_E\cdot\bv_E dE\\
  =&[\na_{\rm NC}\bu_{\rm ECR}+p_{\rm ECR}\id-\bsi_{\rm RT}]\nu_E\cdot\int_E\bv_EdE.
  \end{split}
  \end{equation*}
  Since $\bsi_{\rm RT}\in H(\div, \Omega, \R^{n\times n})$,  this proves the desired result.
  \end{proof}
\begin{theorem} Let $(\bu_{\rm ECR}, p_{\rm ECR})$  be the solution of Problem \eqref{ECRstokes},
and let $(\bsi_{\rm RT}, \bu_{\rm RT})$ be the solution of Problem \eqref{DiscreteMixedstokes}. Then there holds that
\begin{equation*}
 \bsi_{\rm RT}=\na_{\rm NC} \bu_{\rm ECR}+p_{\rm ECR}\id \text{ and }\bu_{\rm RT}=\Pi_0\bu_{\rm ECR}+\mathcal{L}u_{\rm ECR},
\end{equation*}
where $\mathcal{L}u_{ECR}\in (U_{\rm RT})^n $ is defined by
 $$
 (\mathcal{L}u_{\rm ECR}, \div \btau)=( \div_{\rm NC} \bu_{\rm ECR}, \tr\btau/n)\text{ for any }\btau\in (\widehat{{\rm RT}}(\cT))^n.
 $$
 \begin{remark}Since $\Pi_0  \div_{\rm NC} \bu_{\rm ECR}=0$ and $\div \btau=0$ implies that $\btau$ is a piecewise constant matrix--valued
  function, the operator $\mathcal{L}$ is well--defined.
 \end{remark}
\end{theorem}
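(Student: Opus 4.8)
The plan is to mimic the structure of the Poisson proof: set $\bsi^\ast:=\na_{\rm NC}\bu_{\rm ECR}+p_{\rm ECR}\id$ and $\bu^\ast:=\Pi_0\bu_{\rm ECR}+\mathcal{L}u_{\rm ECR}$, show that the pair $(\bsi^\ast,\bu^\ast)$ is admissible for and solves Problem \eqref{DiscreteMixedstokes}, and then invoke uniqueness of the discrete solution to conclude $\bsi_{\rm RT}=\bsi^\ast$ and $\bu_{\rm RT}=\bu^\ast$. First I would check admissibility, i.e. $\bsi^\ast\in(\widehat{{\rm RT}}(\cT))^n$. Locally each row $\na u_{{\rm ECR},i}|_K$ lies in ${\rm RT}(K)$ because the gradient of $\sum_i x_i^2$ is $2x$, while $p_{\rm ECR}\id$ is piecewise constant; together with Lemma \ref{HDIVProperty-Stokes} this gives $\bsi^\ast\in({\rm RT}(\cT))^n\cap H(\div,\Om,\R^{n\times n})$. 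For the zero-mean trace constraint, the second equation of \eqref{ECRstokes} forces $\Pi_0\div_{\rm NC}\bu_{\rm ECR}$ to be a global constant, which must vanish since $\int_\Om\div_{\rm NC}\bu_{\rm ECR}\dx=\sum_{K}\int_{\partial K}\bu_{\rm ECR}\cdot\nu\ds=0$ by the ECR jump conditions; combined with $\int_\Om p_{\rm ECR}\dx=0$ this yields $\int_\Om\tr\bsi^\ast\dx=0$.

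For the first equation of \eqref{DiscreteMixedstokes}, the key algebraic observation is that $(\dev\bsi^\ast,\dev\btau)=(\na_{\rm NC}\bu_{\rm ECR},\dev\btau)$, since the tensor $\id$ pairs to zero against the trace-free $\dev\btau$. Writing $\dev\btau=\btau-\tfrac1n(\tr\btau)\id$, I would integrate $(\na_{\rm NC}\bu_{\rm ECR},\btau)$ by parts on each $K$; because $\btau\nu_E$ is constant on every face $E$ for a first order RT tensor and $\int_E[\bu_{\rm ECR}]\ds=0$, $\int_E\bu_{\rm ECR}\ds=0$, all interface contributions cancel, leaving $(\na_{\rm NC}\bu_{\rm ECR},\btau)=-(\Pi_0\bu_{\rm ECR},\div\btau)$ (using that $\div\btau$ is piecewise constant). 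The remaining trace contribution reduces to $\tfrac1n(\div_{\rm NC}\bu_{\rm ECR},\tr\btau)$, which is exactly $(\mathcal{L}u_{\rm ECR},\div\btau)$ by the definition of $\mathcal{L}$. Adding these gives $(\dev\bsi^\ast,\dev\btau)+(\bu^\ast,\div\btau)=0$ for all $\btau\in(\widehat{{\rm RT}}(\cT))^n$.

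For the second equation, I would test \eqref{ECRstokes} with the field $\bv$ whose $j$-th component is $\phi_K$ and whose other components vanish. The pressure term drops because $\int_E\phi_K\ds=0$ on every face, and the interface gradient terms drop for the same reason, so an integration by parts yields $\Delta u_{{\rm ECR},j}|_K=-f_j|_K$ on each $K$. Since the $i$-th component of $\div\bsi^\ast$ equals $\Delta u_{{\rm ECR},i}$ (the piecewise constant pressure contributes nothing to the divergence), this gives $\div\bsi^\ast=-\bf$ elementwise, and because $\bsi^\ast\in H(\div)$ we obtain $(\div\bsi^\ast,\bv)=-(\bf,\bv)$ for all $\bv\in(U_{\rm RT})^n$.

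I expect the main obstacle to be the bookkeeping in the first equation: namely, verifying that the trace-coupling term generated by the deviatoric inner product is precisely absorbed by the auxiliary velocity $\mathcal{L}u_{\rm ECR}$, together with confirming the zero-mean trace property so that $\bsi^\ast$ is a legitimate competitor in $(\widehat{{\rm RT}}(\cT))^n$. The bubble-test computation of the second equation and the $H(\div)$-conformity are then routine, the latter already being supplied by Lemma \ref{HDIVProperty-Stokes}.
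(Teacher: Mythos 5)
Your proposal is correct and follows essentially the same route as the paper's proof: the deviatoric--trace decomposition of $(\dev(\na_{\rm NC}\bu_{\rm ECR}+p_{\rm ECR}\id),\dev\btau)$, elementwise integration by parts with the ECR face conditions annihilating the interface terms, the definition of $\mathcal{L}$ absorbing the trace coupling, the bubble-function test for the divergence equation, and the zero-mean trace check via $\int_\Omega \div_{\rm NC}\bu_{\rm ECR}\,dx=0$. If anything, your component-wise bubble tests $\phi_K e_j$ are slightly more careful than the paper's single test $\bv_K=(\phi_K,\cdots,\phi_K)$, which strictly speaking only yields the vanishing of the sum of the component identities rather than each component separately.
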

\begin{proof} We prove that $(\na_{\rm NC} \bu_{\rm ECR}+p_{\rm ECR}\id, \Pi_0\bu_{\rm ECR}+\mathcal{L}u_{\rm ECR})$ is the solution of Problem \eqref{DiscreteMixedstokes}. We start with a simple but important property of the deviatoric operator $\dev$ as follows
$$
(\dev \bsi, \dev\btau)=(\bsi, \dev\btau)=(\dev\bsi, \btau) \text{ for any }\bsi, \btau\in H(\div, \Omega, \R^{n\times n}).
$$
 Hence, any $\btau\in (\widehat{{\rm RT}}(\cT))^n$ admits the following decomposition:
\begin{equation}\label{decom}
\begin{split}
&(\dev(\na_{\rm NC} \bu_{\rm ECR}+p_{\rm ECR}\id), \dev\btau)=(\dev \na_{\rm NC} \bu_{\rm ECR}, \btau)=( \na_{\rm NC} \bu_{\rm ECR},\dev \btau)\\
&=( \na_{\rm NC} \bu_{\rm ECR}, \btau)-( \div_{\rm NC} \bu_{\rm ECR}, \tr\btau/n).
\end{split}
\end{equation}
After integrating by parts, the first term on the right--hand side of \eqref{decom} can be rewritten as
$$
( \na_{\rm NC} \bu_{\rm ECR}, \btau)=-(\bu_{\rm ECR}, \div\btau)=-(\Pi_0\bu_{\rm ECR}, \div\btau)
$$
since $\sum\limits_{E\in\cE}\int_E[\bu_{\rm ECR}]\cdot \btau\nu_EdE=0$.  This proves that
$$
(\dev(\na_{\rm NC} \bu_{\rm ECR}+p_{\rm ECR}\id), \dev\btau)+(\Pi_0\bu_{\rm ECR}+\mathcal{L}u_{\rm ECR}, \div\btau)=0,
$$
which is the first equation of Problem \eqref{DiscreteMixedstokes}.  Given any $K$,  define $\bv_K=(\phi_K, \cdots, \phi_K)$. Let $\bv=\bv_K$
  in \eqref{Mixedstokes}.  After integrating by parts, we derive as
  $$
-(\bf, \bv_K)=(\div (\na_{\rm NC} \bu_{\rm ECR}+p_{\rm ECR}\id), \bv_K)\Rightarrow \div (\na_{\rm NC} \bu_{\rm ECR}+p_{\rm ECR}\id)=-\bf.
  $$
  Since it is obvious that $\int_{\Om} \div_{\rm NC} \bu_{\rm ECR}dx=0$,  $\na_{\rm NC} \bu_{\rm ECR}+p_{\rm ECR}\id\in (\widehat{{\rm RT}}(\cT))^n$.
  This completes the proof.
\end{proof}

\subsection{Comments on the Poisson problem with the pure Neumann boundary}
 Given a bounded domain $\Omega\subset \R^n$ with $n\geq 2$ and $f\in L^2(\Om, \R)$,  the Poisson model  problem
 with the pure Neumann boundary condition finds $u\in \hat{H}^1(\Om, \R):=\{v\in H^1(\Omega, \R): \int_{\Omega}vdx=0\}$ such that
\begin{equation}\label{NeumannPoisson}
(\na u, \na v)=(f,v)+<g,v>\quad \text{ for all }v\in \hat{H}^1(\Om, \R).
\end{equation}
where $g|_{\partial \Omega}:=\frac{\partial u}{\partial\nu}|_{\partial \Om}\in H^{-1/2}(\partial\Om,\R)$.
Suppose that $(f,v)+<g,v>=0$, this problem admits a unique solution.
 For this problem,  the  equivalent mixed  formulation seeks $(\sigma, u)\in H_g(\div, \R^n)\times L^2(\Omega,\R)$ such that
 \begin{equation}\label{NeumannMixedPoisson}
\begin{split}
 &(\sigma, \tau)+(u, \div \tau)=0\quad \text{ for any }\tau\in H_0(\div,\Omega,\R^n),\\
 &(\div \sigma, v)=(-f, v)\quad \text{ for any }v\in L^2(\Omega, \R).
\end{split}
 \end{equation}
 Here
 $$
 H_D(\div, \R^n)=\{\tau\in H(\div,\Omega,\R^n): \tau\cdot\nu=D\text{ on }\partial\Omega\} \text{ with } D=0 \text{ or }g.
 $$
 Suppose that both $f$ and $g$ are a piecewise constant function. Then the result in \eqref{ECR-RT} holds equally for this case.
 Since the space for the CR element is a subspace of the ECR element,  this implies that the CR element can not be  equal to the RT element.
 In fact, for two dimensions,  let the  exact solution of Problem \eqref{NeumannPoisson} be $u=x_1^2+x_2^2$, which yields that
 $f=-4$ and $g$ is a piecewise constant on a polygonal domain.  For this problem, the RT element gives the exact solution while the error
 of the CR  element has the following lower bound
 $$
 \beta h\leq \|\na_{\rm NC}(u-u_{\rm CR})\|
 $$
 for some positive constant $\beta$ and the meshsize $h$ of the domain, see \cite{HuHuangLin2010} for more details of proof.

\section{Equivalence between the ECR and RT elements for eigenvalue problem}
First we introduce the eigenvalue problem for the Laplace operator and the finite element method in this section. The eigenvalue problem finds $(\lam, u)\in\R\x H^1_0(\Om, \R)$ such that
\begin{equation}\label{eigen}
\begin{split}
(\nabla u, \nabla v) =\lam (u, v) \text{ for any }v\in L^2(\Omega, \R)\text{ and }\|u\|=1.
\end{split}
\end{equation}
By introducing an auxiliary variable $\sigma:=\nabla u$,  the problem can be formulated as the following equivalent
mixed problem which seeks $(\lam,\sigma, u)\in \R\x H(\div,\Omega,\R^n)\times L^2(\Omega, \R)$ such that
 \begin{equation}\label{MixedEigen}
\begin{split}
 &(\sigma, \tau)+(u, \div \tau)=0\quad \text{ for any }\tau\in H(\div,\Omega, \R^n),\\
 &(\div \sigma, v)=-\lam(u, v)\quad \text{ for any }v\in L^2(\Omega, \R)\text{ and }\|u\|=1.
\end{split}
 \end{equation}
The ECR element method of \eqref{eigen} seeks $(\lam_{\rm ECR}, u_{\rm ECR})\in \R\x V_{\rm ECR}$ such that
\begin{equation}\label{discreteeigentotal}
\begin{split}
(\nabla_{\rm NC}u_{\rm ECR}, \nabla_{\rm NC}v)&=\lam_{\rm ECR} (u_{\rm ECR},v) \text{ for any }
v\in V_{\rm ECR} \text{ and }  \|u_{\rm ECR}\|=1.
\end{split}
\end{equation}
The RT element method of Problem \eqref{MixedEigen} seeks $(\lam_{\rm RT},\sigma_{\rm RT},u_{\rm RT})\in \R\x{\rm RT}(\cT)\x U_{\rm RT}$ such that
 \begin{equation}\label{dicreteMixedEigen}
\begin{split}
 &(\sigma_{\rm RT}, \tau)+(u_{\rm RT}, \div \tau)=0\quad \text{ for any }\tau\in {\rm RT}(\cT),\\
 &(\div \sigma_{\rm RT}, v)=-\lam_{\rm RT}(u_{\rm RT}, v)\quad \text{ for any }v\in U_{\rm RT} \text{ and }\|u_{\rm RT}\|=1.
\end{split}
 \end{equation}

Assume, for simplicity, we only consider the case of $\lam$ is an eigenvalue of multiplicity 1. We define $T$ as the inverse operator of continuous problem, i.e. for any $f\in L^2(\Omega,\R)$, $Tf=u_f\in H^1_0(\Omega,\R)$, where $u_f$ satisfies the Poisson equation \eqref{Poisson}, i.e.
\begin{equation}
\label{operatorT}
  (\nabla u_f,\nabla v)=(f,v)\quad\text{ for any }v\in H^1_0(\Omega,\R).
\end{equation}
Generally speaking, the regularity of $u_f$ depends on, among others, regularities of $f$ and the shape of the domain $\Omega$. To fix the main idea and therefore avoid too technical notation, throughout the remaining paper, without loss of generality, assume that $u_f\in H^1_0(\Omega,\R)\cap H^{1+s}(\Omega,\R)$ with $0<s\leq1$ in the sense that
\begin{equation}\label{regularity}
\|u_f\|_{H^{1+s}(\Omega,\R)}\lesssim\|f\|.
\end{equation}
Here and throughout the paper, the inequality $A\lesssim B$ replaces $A\leq CB$ with some multiplicative mesh--size independent constant $C>0$ that depends on the domain $\Omega$, the shape of element, and possibly the eigenvalue $\lam$.

It follows from the theory  of nonconforming eigenvalue approximation \cite{HuHuangLin2010} and known a priori estimate that,
\begin{equation}
\label{errorECR}
  |\lam-\lam_{\rm ECR}|+\|u-u_{\rm ECR}\|+h^s\|\nabla_{\rm NC}(u-u_{\rm ECR})\|\lesssim h^{2s}\|u\|_{H^{1+s}(\Omega,\R)}
\end{equation}
and the theory of mixed eigenvalue approximation \cite{MercierOsborn1981} that
\begin{equation}
\label{errorRT}
  |\lam-\lam_{\rm RT}|+h^s(\|\sigma-p_{\rm RT}\|+\|u-u_{\rm RT}\|)\lesssim h^{2s}\|u\|_{H^{1+s}(\Omega,\R)}.
\end{equation}
Using \eqref{regularity}, the bound for the eigenvalue $\lam\lesssim1$ and $\|u\|=1$, there holds that
\begin{equation*}
\|u\|_{H^{1+s}(\Omega,\R)}\lesssim\|\lam u\|\lesssim1.
\end{equation*}

To analyze the equivalence, we introduce the following discrete problem: Find $\phi_{\rm ECR}\in V_{\rm ECR}$ such that
\begin{equation}\label{equiEgen}
(\nabla_{\rm NC}\phi_{\rm ECR},\nabla_{\rm NC} v)=\lam_{\rm RT}(\Pi_0\phi_{\rm ECR},v)\text{ for any }v\in V_{\rm ECR}.
\end{equation}
It follows from Theorem 3.3 that Problem \eqref{dicreteMixedEigen} is equivalent to \eqref{equiEgen} in the sense that they have the same eigenvalues $\lam_{\rm RT}$ and the eigenvectors are related by $\sigma_{\rm RT}=\nabla_{\rm NC}\phi_{\rm ECR}$ and $u_{\rm RT}=\Pi_0\phi_{\rm ECR}$.

Similar to the analysis in \cite{Duran1999}, applying to Problem \eqref{equiEgen} the general theory developed for example in \cite{BabuskaOsborn1991} we can prove that
\begin{equation}
\label{error}
\| u-\bar{\phi}_{\rm ECR}\|\lesssim h^{2s},
\end{equation}
where $\bar{\phi}_{\rm ECR}=\phi_{\rm ECR}/\|\phi_{\rm ECR}\|$.
To present it clearly, we follow a similar argument in \cite{Duran1999} and give the proof of \eqref{error}. Let $T_h$ be defined as the inverse operators of the following discrete problem, i.e., for $f\in L^2(\Omega,\R)$, $T_hf=w_h\in V_{\rm ECR}$ where  $w_h$ satisfies

\begin{equation}
\label{operatorTh}
  (\nabla_{\rm NC} w_h,\nabla_{\rm NC} v)=(\Pi_0f,v)\text{ for any }v\in V_{\rm ECR}.
\end{equation}
Let $\mathbb{E}$ denote the eigenspace corresponding to $\lam$. We have the following two results.
\begin{lemma}
Suppose $T$ is defined in \eqref{operatorT} and $T_h$ is defined in \eqref{operatorTh}. Then,
\begin{equation*}
\|(T-T_h)|_{\mathbb{E}}\|_{\mathcal{L}(L^2,L^2)}\lesssim h^{2s}.
\end{equation*}
\end{lemma}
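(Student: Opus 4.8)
The plan is to prove the bound by the Aubin--Nitsche duality technique, in the spirit of \cite{Duran1999}. Fix $f\in\mathbb{E}$ and set $u:=Tf$ and $w_h:=T_hf\in V_{\rm ECR}$, so that the claim reduces to $\|u-w_h\|\lesssim h^{2s}\|f\|$. Since $f$ is an eigenfunction, $u=\lam^{-1}f$ inherits the regularity $\|u\|_{H^{1+s}(\Om,\R)}\lesssim\|f\|$ from \eqref{regularity}. The first ingredient I would record is the error equation: testing $-\Delta u=f$ against $v\in V_{\rm ECR}$ elementwise and subtracting \eqref{operatorTh} gives
\[
(\na_{\rm NC}(u-w_h),\na_{\rm NC}v)=(f-\Pi_0 f,v)+\sum_{E\in\cE}\int_E \na u\cdot\nu_E\,[v]\,dE\qquad\text{for all }v\in V_{\rm ECR}.
\]
Here the first right--hand term is the perturbation caused by using $\Pi_0 f$ in place of $f$, and the second is the classical nonconforming consistency term.

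Next I would derive the energy estimate $\|\na_{\rm NC}(u-w_h)\|\lesssim h^s\|f\|$ via the second Strang lemma. The best--approximation part is controlled by the ECR interpolation error $\lesssim h^s\|u\|_{H^{1+s}(\Om,\R)}$; the consistency sum is bounded, after subtracting on each $E$ the mean of $\na u\cdot\nu_E$ (legitimate because $\int_E[v]\,dE=0$ for $v\in V_{\rm ECR}$), by $h^s\|u\|_{H^{1+s}(\Om,\R)}\|\na_{\rm NC}v\|$; and the perturbation term is rewritten as $(f-\Pi_0 f,v-\Pi_0 v)$, hence bounded by $\|f-\Pi_0 f\|\,\|v-\Pi_0 v\|\lesssim h^2\|f\|\,\|\na_{\rm NC}v\|$, which is of higher order. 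Combining these yields the claimed $O(h^s)$ energy bound.

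Then comes the duality step. Let $e:=u-w_h$ and $\psi:=Te$, so that $-\Delta\psi=e$ with $\psi\in H^1_0(\Om,\R)\cap H^{1+s}(\Om,\R)$ and $\|\psi\|_{H^{1+s}(\Om,\R)}\lesssim\|e\|$ by \eqref{regularity}. Elementwise integration by parts gives
\[
\|e\|^2=(\na_{\rm NC}e,\na\psi)-\sum_{E\in\cE}\int_E[e]\,\na\psi\cdot\nu_E\,dE.
\]
Writing $\na\psi=\na_{\rm NC}\psi_I+\na(\psi-\psi_I)$ with $\psi_I\in V_{\rm ECR}$ the ECR interpolant of $\psi$, and inserting the error equation for $(\na_{\rm NC}e,\na_{\rm NC}\psi_I)$, I would split $\|e\|^2$ into four pieces: the approximation piece $(\na_{\rm NC}e,\na(\psi-\psi_I))$, the two consistency sums (one pairing $\na u\cdot\nu_E$ with $[\psi_I]$, the other pairing $\na\psi\cdot\nu_E$ with $[e]=-[w_h]$), and the perturbation piece $(f-\Pi_0 f,\psi_I)$. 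Each is estimated by a trace/approximation argument together with the $O(h^s)$ energy bounds, producing $\|e\|^2\lesssim h^{2s}\|f\|\,\|e\|$ and hence $\|e\|\lesssim h^{2s}\|f\|$, which is the assertion.

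The main obstacle I anticipate is the simultaneous clean control of both consistency sums together with the right--hand--side perturbation: one must choose the correct face--wise constants so as to exploit $\int_E[\cdot]\,dE=0$, invoke trace inequalities with the right powers of $h$, and verify that the $\Pi_0 f$ perturbation is genuinely of higher order (here $h^2\le h^{2s}$ since $s\le1$). The ECR--specific point, which I would emphasize, is that the extra bubble $\sum_i x_i^2$ does not disturb the vanishing face averages $\int_E[v]\,dE=0$, so the consistency analysis proceeds exactly as for the Crouzeix--Raviart element.
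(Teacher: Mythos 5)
Your proposal is correct in substance, but it takes a genuinely different route from the paper's proof. The paper first splits $Tf-T_hf=\bigl(Tf-T(\Pi_0 f)\bigr)+\bigl(T(\Pi_0 f)-T_hf\bigr)$: the second difference is the honest nonconforming discretization error for the piecewise constant datum $\Pi_0 f$, which is not re-proved but simply cited (from \cite{BrennerScott1996}) as the standard duality estimate $\lesssim h^{2s}\|\Pi_0 f\|$; the first difference lives entirely at the continuous level and is dispatched by one short duality computation, $(e,e)=(f-\Pi_0 f,r-\Pi_0 r)\lesssim h^2\|\nabla f\|\,\|e\|$, followed by the eigenfunction bound $\|\nabla f\|\lesssim\|f\|$ on $\mathbb{E}$. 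You instead run a single monolithic duality argument at the discrete level, which forces you to re-derive the nonconforming $L^2$ theory from scratch: the error equation, the $O(h^s)$ energy bound, and both consistency sums. Your plan does work --- the same two key ingredients appear (the $\Pi_0$-orthogonality trick and $\|\nabla f\|\lesssim\|f\|$ on the eigenspace, which renders the $h^2$ perturbation harmless since $h^2\le h^{2s}$) --- but it is considerably longer, and the consistency bookkeeping is delicate: to extract $O(h^{2s})$ rather than merely $O(h^s)$ from $\sum_{E\in\cE}\int_E\nabla u\cdot\nu_E\,[\psi_I]\,dE$ you must both subtract the face--mean of the flux (using $\int_E[\psi_I]\,dE=0$) and replace $[\psi_I]$ by $[\psi_I-\psi]$ so that the interpolation estimate supplies the extra factor $h^{s}$; similarly the sum pairing $\nabla\psi\cdot\nu_E$ with $[e]$ needs $\int_E[e]\,dE=0$ together with the energy bound for $e$. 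What your approach buys is self-containedness and an explicit accounting of where each power of $h$ comes from; what the paper's splitting buys is brevity and a clean isolation of the only genuinely new point of the lemma, namely the effect of replacing $f$ by $\Pi_0 f$, which it handles at the continuous level where no consistency terms arise.
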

\begin{proof}
We have to show that
\begin{equation*}
  \|Tf-T_hf\|\lesssim h^{2s}\|f\|\text{ for any }f\in \mathbb{E}.
\end{equation*}
Let $u_f=Tf$, $u_{\Pi_0f}=T(\Pi_0f)$ and $w_h=T_hf$. First a standard argument for nonconforming finite element methods, see, for instance, \cite{BrennerScott1996}, proves
\begin{equation}\label{4.1}
  \|T(\Pi_0f)-T_hf\|=\|u_{\Pi_0f}-w_h\|\lesssim h^{2s}\|\Pi_0f\|\lesssim h^{2s}\|f\|.
\end{equation}
Let $e=u_f-u_{\Pi_0f}$ and $r\in H^1_0(\Omega,\R)$ be the solution of $-\Delta r=e$. Then a standard duality argument gives,
\begin{equation}\label{lemma1}
\begin{split}
  (e,e)&=(\nabla e,\nabla r)=(f-\Pi_0f,r)\\
  &=(f-\Pi_0f,r-\Pi_0r).
  \end{split}
\end{equation}
Hence, the property of piecewise constant $L^2$ projection $\Pi_0$ implies that
$$
\|e\|\lesssim h^2\|\nabla f\|.
$$
Since $f\in \mathbb{E}$, there exists a constant C depending on $\lam$ such that $\|\nabla f\|\leq C\|f\|$ and so
$$
\|e\|\lesssim h^2\|f\|.
$$
This and \eqref{4.1} complete the proof.
\end{proof}
\begin{lemma}

The sequence $\{T_h\}_h$ converges uniformly to $T$ in $\mathcal{L}(L^2,L^2)$ as $h$ goes to 0.
\end{lemma}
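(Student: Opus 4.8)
The plan is to estimate $\|Tf-T_hf\|$ uniformly over $f\in L^2(\Omega,\R)$ by routing the error through the intermediate solution $T(\Pi_0f)$, namely
$$
Tf-T_hf=\big(Tf-T(\Pi_0f)\big)+\big(T(\Pi_0f)-T_hf\big),
$$
and bounding each summand by a constant times a positive power of $h$ times $\|f\|$. The second summand is precisely the nonconforming finite element error already estimated in \eqref{4.1}; since its derivation uses only the regularity \eqref{regularity} of $u_{\Pi_0f}$ with data $\Pi_0f$, which is available for \emph{every} $f\in L^2(\Omega,\R)$ and not merely for $f\in\mathbb{E}$, I may invoke $\|T(\Pi_0f)-T_hf\|\lesssim h^{2s}\|f\|$ for arbitrary $f$.

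The first summand equals $e:=T(f-\Pi_0f)\in H^1_0(\Omega,\R)$, and here lies the main obstacle. In the previous lemma the extra power of $h$ came from $\|f-\Pi_0f\|\lesssim h\|\nabla f\|$, which requires $f\in H^1$ and is unavailable for a generic $L^2$ datum. The remedy is to transfer all the smoothness onto the dual variable: I would introduce $r\in H^1_0(\Omega,\R)$ solving $-\Delta r=e$, so that elliptic regularity gives $\|r\|_{H^{1+s}(\Omega,\R)}\lesssim\|e\|$ and in particular $\|\nabla r\|\lesssim\|e\|$. A duality argument then yields
$$
\|e\|^2=(\nabla e,\nabla r)=(f-\Pi_0f,r),
$$
and because $f-\Pi_0f$ is $L^2$-orthogonal to every piecewise constant function, one may subtract $\Pi_0r$ to obtain $\|e\|^2=(f-\Pi_0f,r-\Pi_0r)$. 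Combining the contraction bound $\|f-\Pi_0f\|\le\|f\|$ with the approximation estimate $\|r-\Pi_0r\|\lesssim h\|\nabla r\|\lesssim h\|e\|$ gives $\|e\|^2\lesssim h\|f\|\,\|e\|$, hence $\|Tf-T(\Pi_0f)\|=\|e\|\lesssim h\|f\|$.

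Putting the two bounds together produces $\|Tf-T_hf\|\lesssim(h+h^{2s})\|f\|$ for all $f\in L^2(\Omega,\R)$, so that $\|T-T_h\|_{\mathcal{L}(L^2,L^2)}\lesssim h+h^{2s}\to0$ as $h\to0$, which is the asserted uniform convergence. I emphasize that only convergence to zero, and not the precise rate $h^{\min(1,2s)}$, is required here; the essential gain over the trivial bound $\|f-\Pi_0f\|\le\|f\|$ is that shifting the projection onto the dual solution $r$ extracts one full power of $h$ without assuming any regularity of $f$ itself.
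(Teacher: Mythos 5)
Your proposal is correct and follows essentially the same route as the paper's own proof: split $Tf-T_hf$ through the intermediate solution $T(\Pi_0 f)$, reuse the estimate \eqref{4.1} (which is valid for every $f\in L^2(\Omega,\R)$), and recover one power of $h$ in the remaining term $e=T(f-\Pi_0 f)$ by the duality argument with $-\Delta r=e$, shifting the projection $\Pi_0$ onto $r$ so that no regularity of $f$ is needed. The paper carries out exactly this computation, only more tersely (and, like you, concludes the rate $h^{\min\{2s,1\}}$ even though mere convergence to zero would suffice).
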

\begin{proof}
We show that for all $f\in L^2(\Omega,\R)$ we have
$$
\|Tf-T_hf\|\lesssim h^{\min\{2s,1\}}\|f\|.
$$
The proof follows the same lines as the previous lemma. The fact that $f$ belonged to the eigenspace $\mathbb{E}$ was used only once to estimate \eqref{lemma1} with the desired order. When $f$ is taken in $L^2(\Omega,\R)$ we can only obtain from the following bound, using similar arguments as before
\begin{equation*}
\begin{split}
  (e,e)&=(\nabla e,\nabla r)=(f-\Pi_0f,r)\\
  &=(f-\Pi_0f,r-\Pi_0r)\\
  &\lesssim h\|f-\Pi_0f\|\|\nabla r\|\\
  &\lesssim h\|f\|\|e\|.
  \end{split}
\end{equation*}
This and \eqref{4.1} imply the desired order.
\end{proof}
Since the sequence of operators $\{T_h\}_h$ converges uniformly to $T$ in $\mathcal{L}(L^2,L^2)$, well-known results in the theory of spectral approximation yield the following error estimate for eigenvectors, see e.g. \cite{BabuskaOsborn1991}
\begin{equation}\label{error2}
\|u-\bar{\phi}_{\rm ECR}\|\lesssim\|(T-T_h)|_{\mathbb{E}}\|_{\mathcal{L}(L^2,L^2)}.
\end{equation}
Then \eqref{error} is a consequence of \eqref{error2} and Lemma 4.1. In fact,
 \begin{equation*}
 \begin{split}
  \|\phi_{\rm ECR}\|^2&=\|(I-\Pi_0)\phi_{\rm ECR}\|^2+\|\Pi_0\phi_{\rm ECR}\|^2\\
  &=1+\|(I-\Pi_0)\phi_{\rm ECR}\|^2.\\
  \end{split}
\end{equation*}
This and the property of piecewise constant $L^2$ projection $\Pi_0$ yield $0\leq\|\phi_{\rm ECR}\|-1\lesssim \lam_{\rm RT}h^2$, and so $\phi_{\rm ECR}$ satisfies
\begin{equation}\label{errorRT2}
\| u-\phi_{\rm ECR}\|\lesssim h^{2s}.
\end{equation}

The equivalence result for the errors of the eigenfunction approximations is presented as follows.
\begin{theorem}
For sufficiently small $h\ll1$, the discrete eigenfunctions $u_{\rm ECR}$ and $\sigma_{\rm RT}$ satisfy
$$
\|\nabla u-\nabla_{\rm NC}u_{\rm ECR}\|=\|\nabla u-\sigma_{\rm RT}\|+h.o.t.
$$
\end{theorem}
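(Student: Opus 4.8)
The plan is to reduce the comparison of the two a priori errors to a single estimate for the difference of $u_{\rm ECR}$ and the auxiliary function $\phi_{\rm ECR}$ of \eqref{equiEgen}, both of which live in $V_{\rm ECR}$ and carry the same broken gradient. By the equivalence between \eqref{dicreteMixedEigen} and \eqref{equiEgen} recorded after Theorem 3.3, one has $\sigma_{\rm RT}=\nabla_{\rm NC}\phi_{\rm ECR}$, hence $\|\nabla u-\sigma_{\rm RT}\|=\|\nabla u-\nabla_{\rm NC}\phi_{\rm ECR}\|$. The reverse triangle inequality then gives
\[
\big|\,\|\nabla u-\nabla_{\rm NC}u_{\rm ECR}\|-\|\nabla u-\sigma_{\rm RT}\|\,\big|\le\|\nabla_{\rm NC}(u_{\rm ECR}-\phi_{\rm ECR})\|,
\]
so the theorem follows once I show $\|\nabla_{\rm NC}(u_{\rm ECR}-\phi_{\rm ECR})\|\lesssim h^{2s}$; since $0<s\le1$ gives $2s>s$, this is genuinely of higher order than the leading term $\|\nabla u-\sigma_{\rm RT}\|\sim h^s$ from \eqref{errorRT}.

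To estimate $e:=u_{\rm ECR}-\phi_{\rm ECR}$, I subtract the defining relations \eqref{discreteeigentotal} and \eqref{equiEgen} and test with $v=e$, so that $\|\nabla_{\rm NC}e\|^2$ equals the right--hand side, which I split as
\[
\lambda_{\rm ECR}(u_{\rm ECR},e)-\lambda_{\rm RT}(\Pi_0\phi_{\rm ECR},e)=\lambda_{\rm ECR}\|e\|^2+(\lambda_{\rm ECR}-\lambda_{\rm RT})(\phi_{\rm ECR},e)+\lambda_{\rm RT}((I-\Pi_0)\phi_{\rm ECR},e).
\]
The first term is controlled by $\|e\|\le\|u-u_{\rm ECR}\|+\|u-\phi_{\rm ECR}\|\lesssim h^{2s}$ from \eqref{errorECR} and \eqref{errorRT2} (after fixing signs so that both normalized discrete eigenfunctions approximate the same $u$), giving $\lambda_{\rm ECR}\|e\|^2\lesssim h^{4s}$. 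The second is handled by $|\lambda_{\rm ECR}-\lambda_{\rm RT}|\lesssim h^{2s}$, read off from \eqref{errorECR}--\eqref{errorRT}, together with $\|\phi_{\rm ECR}\|\lesssim1$, and is again of order $h^{4s}$.

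The decisive term is the last one, which encodes the only structural difference between the two eigenproblems, namely the full $L^2$ inner product versus its $\Pi_0$--projection. Using that $I-\Pi_0$ is a self--adjoint projection I rewrite $((I-\Pi_0)\phi_{\rm ECR},e)=((I-\Pi_0)\phi_{\rm ECR},(I-\Pi_0)e)$. The normalization identity established before \eqref{errorRT2}, $\|\phi_{\rm ECR}\|^2=1+\|(I-\Pi_0)\phi_{\rm ECR}\|^2\lesssim1+\lambda_{\rm RT}h^2$, yields $\|(I-\Pi_0)\phi_{\rm ECR}\|\lesssim h$, while the elementwise Poincar\'e estimate for the piecewise polynomial $e$ gives $\|(I-\Pi_0)e\|\lesssim h\|\nabla_{\rm NC}e\|$. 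Hence the last term is $\lesssim h^2\|\nabla_{\rm NC}e\|$, and collecting the three contributions produces the quadratic inequality
\[
\|\nabla_{\rm NC}e\|^2\lesssim h^{4s}+h^2\|\nabla_{\rm NC}e\|,
\]
from which $\|\nabla_{\rm NC}e\|\lesssim h^{2s}+h^2\lesssim h^{2s}$ follows, completing the reduction.

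I expect the crux to be precisely this mass--matrix mismatch term: the gain to the clean rate $h^{2s}$ comes from the double--projection rewriting, spending one factor of $h$ on the smallness of $(I-\Pi_0)\phi_{\rm ECR}$ and another on $(I-\Pi_0)e$ via Poincar\'e, rather than from a crude Cauchy--Schwarz bound. A secondary but essential technical point is the bookkeeping of the normalization ($\|\phi_{\rm ECR}\|=1+O(h^2)$ instead of $1$) and the correct sign alignment of the two discrete eigenfunctions, without which $\|e\|$ need not be $O(h^{2s})$; here the assumed simplicity of $\lambda$ is what guarantees that $u_{\rm ECR}$ and $\phi_{\rm ECR}$ can be taken close to one and the same eigenfunction $u$.
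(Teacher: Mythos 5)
Your proposal is correct and follows essentially the same route as the paper's own proof: the identity $\sigma_{\rm RT}=\nabla_{\rm NC}\phi_{\rm ECR}$, testing the difference of \eqref{discreteeigentotal} and \eqref{equiEgen} with $e=u_{\rm ECR}-\phi_{\rm ECR}$, an algebraically equivalent splitting of $\lam_{\rm ECR}(u_{\rm ECR},e)-\lam_{\rm RT}(\Pi_0\phi_{\rm ECR},e)$, the double--projection rewriting $((I-\Pi_0)\phi_{\rm ECR},(I-\Pi_0)e)$ with elementwise Poincar\'e bounds, and absorption to get $\|\nabla_{\rm NC}e\|\lesssim h^{2s}$. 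The only differences are presentational: you make explicit the reverse triangle inequality and the sign/normalization bookkeeping that the paper leaves implicit.
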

\begin{proof}
Using \eqref{equiEgen} and some elementary manipulation yield
\begin{equation}
\label{errorECRRT}
  \begin{split}
 \|\nabla_{\rm NC}u_{\rm ECR}-\sigma_{\rm RT}\|^2=&\|\nabla_{\rm NC}u_{\rm ECR}-\nabla_{\rm NC}\phi_{\rm ECR}\|^2\\
=&(\lam_{\rm ECR}u_{\rm ECR}-\lam_{\rm RT}\Pi_0\phi_{\rm ECR},u_{\rm ECR}-\phi_{\rm ECR})\\
=&(\lam_{\rm ECR}u_{\rm ECR}-\lam_{\rm RT}\phi_{\rm ECR},u_{\rm ECR}-\phi_{\rm ECR})\\
&+\lam_{\rm RT}((I-\Pi_0)\phi_{\rm ECR},(I-\Pi_0)(u_{\rm ECR}-\phi_{\rm ECR}))\\
\lesssim&\|\lam_{\rm ECR}u_{\rm ECR}-\lam_{\rm RT}\phi_{\rm ECR}\|\|u_{\rm ECR}-\phi_{\rm ECR}\|\\
&+\lam_{\rm RT}h^2\|\nabla_{\rm NC}\phi_{\rm ECR}\|\|\nabla_{\rm NC}u_{\rm ECR}-\sigma_{\rm RT}\|.
\end{split}
\end{equation}
The bound for the eigenvalues $\lam_{\rm ECR}\lesssim1$,$\lam_{\rm RT}\lesssim1$ and the normalisation $\|u_{\rm ECR}\|=1$ yield
\begin{equation*}
\|\lam_{\rm ECR}u_{\rm ECR}-\lam_{\rm RT}\phi_{\rm ECR}\|\lesssim|\lam_{\rm ECR}-\lam_{\rm RT}|+\|u_{\rm ECR}-\phi_{\rm ECR}\|.
\end{equation*}
Therefore, the Young inequalities, \eqref{errorECR} and \eqref{errorRT2} control the first term in \eqref{errorECRRT} as
\begin{equation*}
\begin{split}
\|\lam_{\rm ECR}&u_{\rm ECR}-\lam_{\rm RT}\phi_{\rm ECR}\|\|u_{\rm ECR}-\phi_{\rm ECR}\|\\
&\lesssim|\lam_{\rm ECR}-\lam_{\rm RT}|^2+\|u_{\rm ECR}-\phi_{\rm ECR}\|^2\lesssim h^{4s}.
\end{split}
\end{equation*}
The last term in \eqref{errorECRRT} can be absorbed. Hence it yields that
\begin{equation*}
 \|\nabla_{\rm NC}u_{\rm ECR}-\sigma_{\rm RT}\|\lesssim h^{2s},
\end{equation*}
which is a high order term.
\end{proof}
\section{Numerical results}
In this section, we present some numerical results, which show that ECR elements have some good numerical properties.
\subsection{Poisson problem}
We consider the poisson problem \eqref{Poisson}. Define the bubble function space
\begin{equation*}
  B_{\rm ECR}:=\{v\in L^2(\Omega,\R): v|_K\in \sspan\{\phi_K\},\forall K\in\mathcal{T}\},
\end{equation*}
where $\phi_K$ is defined in \eqref{basisfunction}.
For any $v\in V_{\rm ECR}$, $\Pi v\in V_{\rm CR}$ is given by
\begin{equation*}
  \int_E\Pi vds=\int_Ev ds\text{ for all }E\in\mathcal{E}.
\end{equation*}
Hence $v-\Pi v$ has vanishing average on each $E$ and $v-\Pi v\in B_{\rm ECR}$. Let $u_{\rm ECR}$ be the solution to the discrete problem by the ECR element, then $u_{\rm ECR}$ can be written as $u_{\rm ECR}=\Pi u_{\rm ECR}+u^b$, where $\Pi u_{\rm ECR}\in V_{CR}$ and $u^b\in B_{\rm ECR}$. In \eqref{ECRPoisson}, we choose
\begin{equation*}
  v=\begin{cases}
  \phi_K\quad& x\in K,\\
  0&x\not\in K.
  \end{cases}
\end{equation*}
This gives
$$
(\nabla u_{\rm ECR},\nabla \phi_K)_{L^2(K)}=(f,\phi_K)_{L^2(K)}.
$$
Since $\int_{E_i}\phi_KdE=0, i=1,\cdots,n+1$, an integration by parts leads to the following important orthogonality:
\begin{equation}\label{functionOrthogonality}
(\nabla \Pi u_{\rm ECR},\nabla\phi_K)_{L^2(K)}=0.
\end{equation}
This leads to
\begin{equation}\label{bubblesoving}
(\nabla u^b,\nabla\phi_K)_{L^2(K)}=(f,\phi_K)_{L^2(K)}\quad\forall K\in\mathcal{T},
\end{equation}
and
\begin{equation}\label{CRproblem}
  (\nabla_{\rm NC} \Pi u_{\rm ECR},\nabla_{\rm NC} v)=(f,v)\text{ for all }v\in V_{\rm CR}.
\end{equation}
Consequently, $\Pi u_{ECR}$ is the solution to the discrete problem by the CR element.
Hence we can solve the ECR element by solving \eqref{bubblesoving} on each $K$ and \eqref{CRproblem} for the CR element, respectively.
\begin{remark}For general second order elliptic problems: Find $u\in H^1_0(\Omega,\R)$ such that
\begin{equation*}
  (A\nabla u,\nabla v)=(f,v)\quad\text{ for all }v\in H^1_0(\Omega,\R),
\end{equation*}
when $A$ is a piecewise constant tensor-valued function, a similar  orthogonality of \eqref{functionOrthogonality} still holds
\begin{equation}\label{Orthogonalitysecond}
  (A\nabla \Pi u_{\rm ECR},\nabla\phi_K)_{L^2(K)}=0\quad\forall K\in\mathcal{T}.
\end{equation}
Hence, we can still use the same technique to solve the ECR element. For the more general case, the orthogonality  \eqref{Orthogonalitysecond} does not hold. However, $u^b$ can be eliminated a prior by a static condensation procedure.
\end{remark}
We compute two examples which compare the errors of  the ECR and CR elements. The first example takes $\Omega=(0,1)^2$ and the exact solution $u(x,y)=\sin(\pi x)\sin(\pi y)$; the second takes $\Omega=(0,1)^3$ and the exact solution $u(x,y,z)=\sin(\pi x)\sin(\pi y)\sin(\pi z)$. Both comparisons are illustrated in \reffig{h1error}, which indicates that $\|\nabla_{\rm NC}(u-u_{\rm ECR})\|$ is smaller than $\|\nabla_{\rm NC}(u-u_{\rm CR})\|$.
\begin{figure}[!ht]
\centering
  \subfigure[$u(x,y)=\sin(\pi x)\sin(\pi y)$]{\includegraphics[width=9cm,height=5cm]{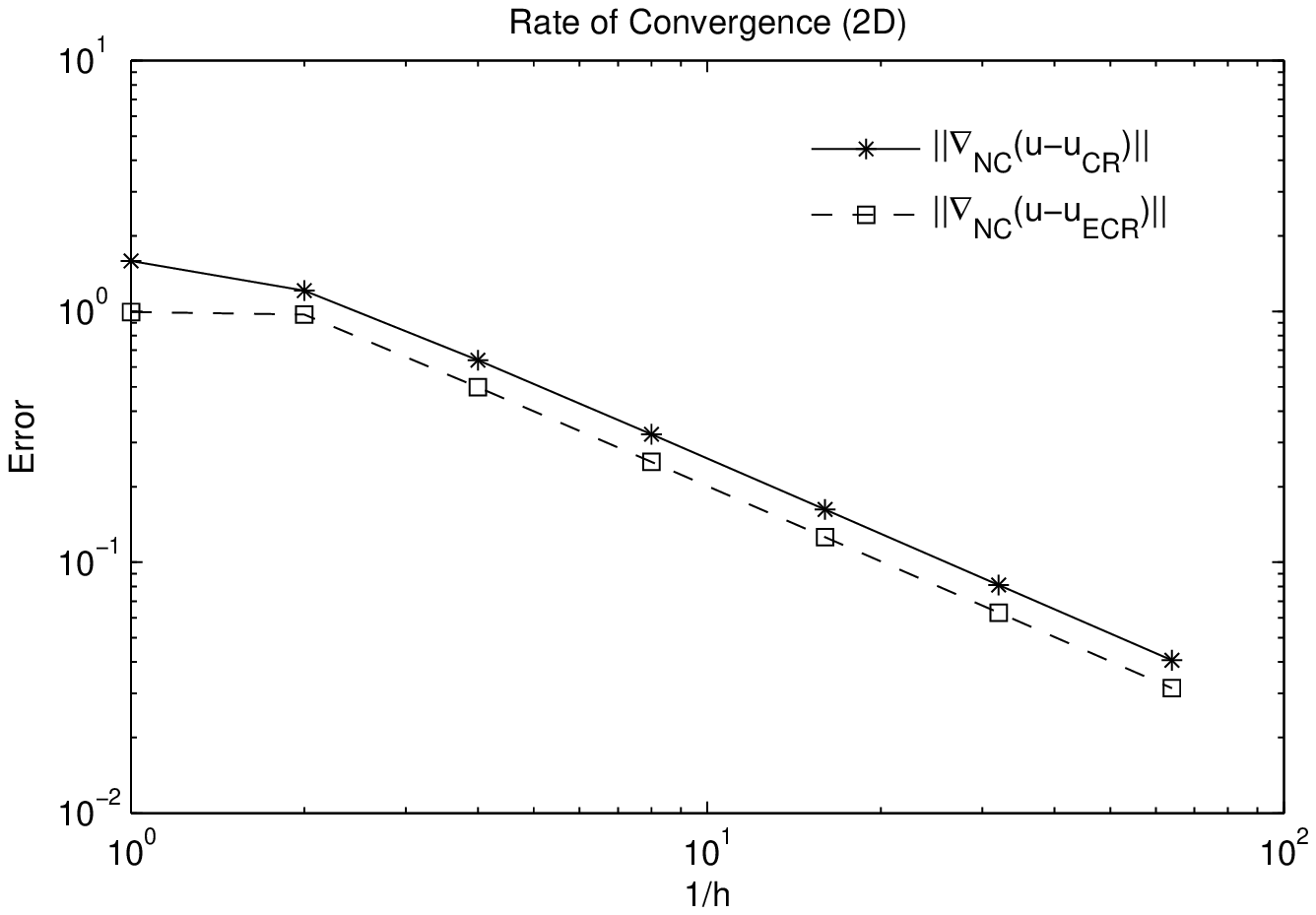}}
  \label{Fig.sub.1}
  \subfigure[$u(x,y,z)=\sin(\pi x)\sin(\pi y)\sin(\pi z)$]{\includegraphics[width=9cm,height=5cm]{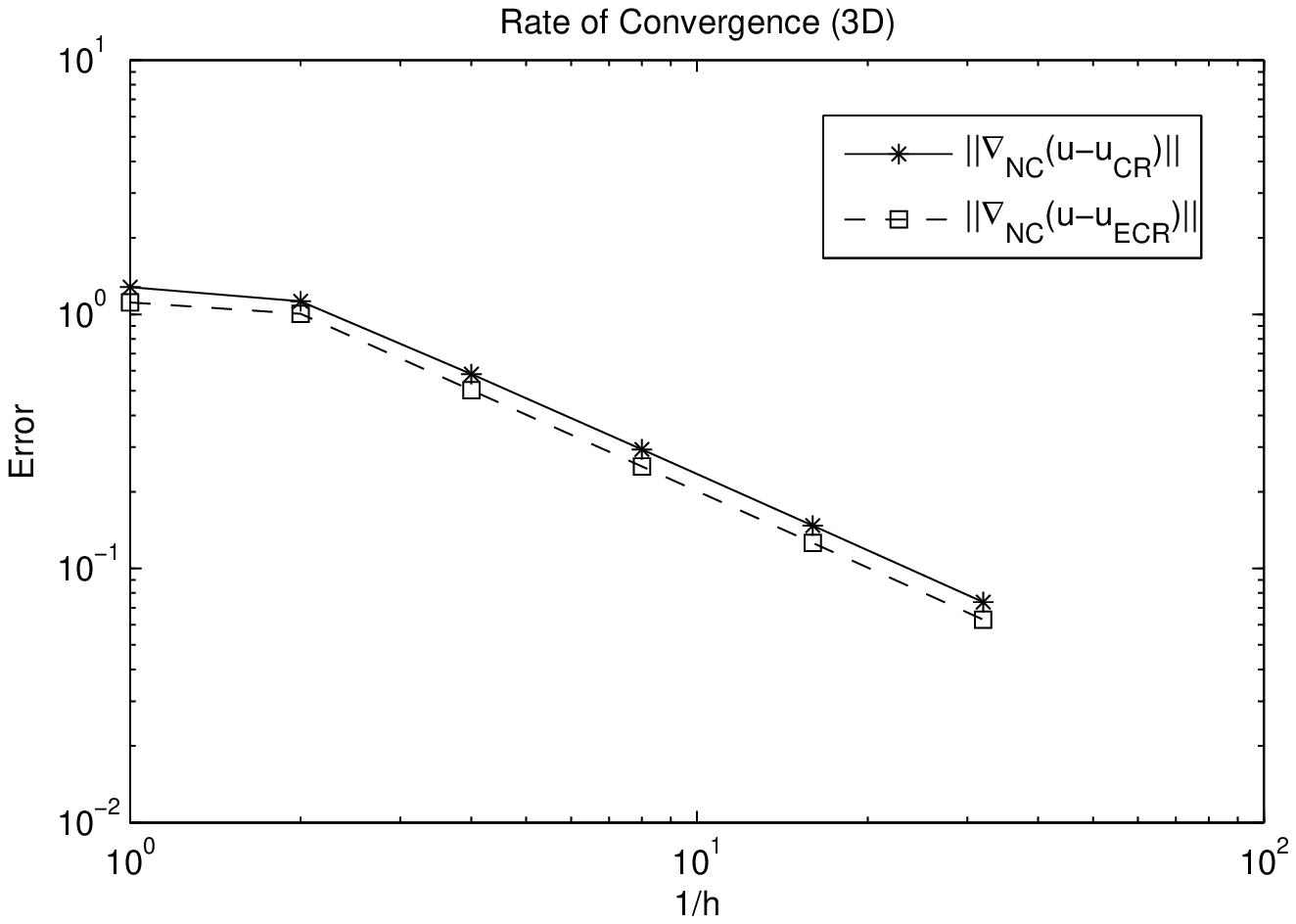}}
  \label{Fig.sub.2}
 \caption{}\label{h1error}
\end{figure}
\subsection{Eigenvalue problem}
We consider the eigenvalue problem \eqref{eigen}. Since $V_{\rm CR}\subset V_{\rm ECR}$, the eigenvalues produced by the ECR element are smaller than those by the CR element. When the meshsize is small enough, the ECR element has been proved to produce lower bounds for eigenvalues, see \cite{HuHuangLin2010}. When eigenfunctions are singular, the CR element provides lower bounds for eigenvalues, see \cite{Armentano}; under some mesh conditions, it also produces lower bounds for eigenvalues when eigenfunctions are smooth, see \cite{HuHuangShen2014}.

On the coarse triangulation of the
square domain $\Omega=(0,1)^2$ from \reffig{trianglulation}, the CR element produces a upper bound $\lam_{\rm CR}=24$ for the first eigenvalue $\lam=2\pi^2\approx19.7392$ of the Laplace operator, while the ECR element gives a lower bound $\lam_{\rm ECR}=17.1429$.

\begin{figure}[!ht]
\centering
  \includegraphics[width=10cm]{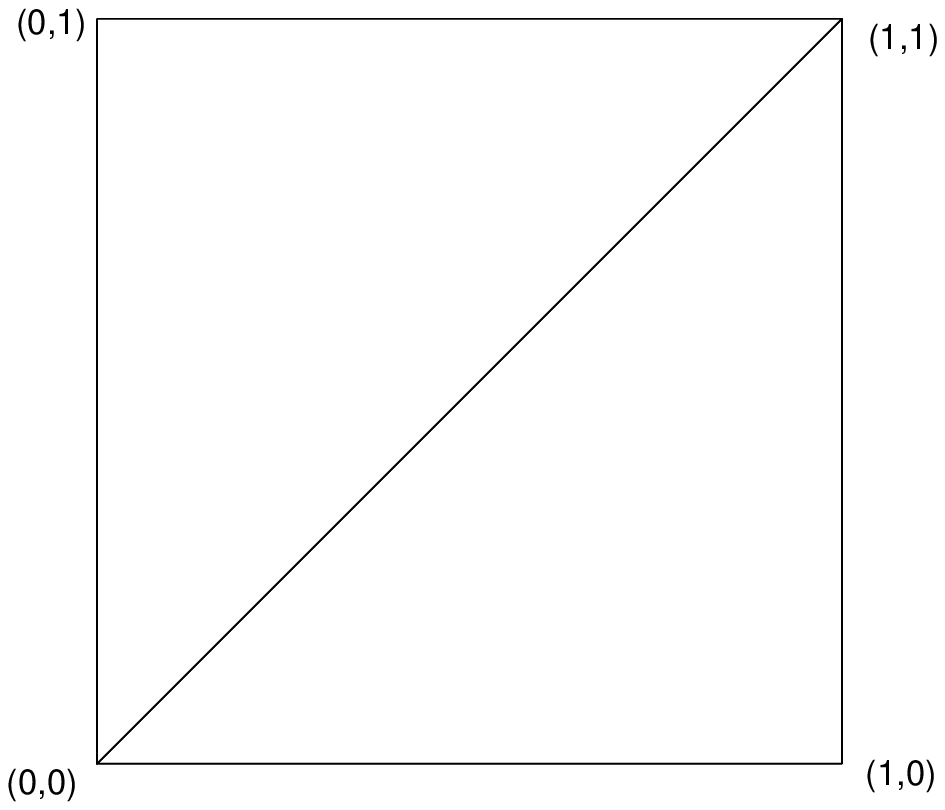}\\
 \caption{}\label{trianglulation}
\end{figure}
\begin{appendix}
\section{Basis Functions and Convergence Analysis of the ECR Element}
For any $K\in\mathcal{T}$, we give the basis functions of the shape function space ${\rm ECR}(K)$.
Suppose the coordinate of  the centroid ${\rm mid}(K)$ is $(M_1,M_2\cdots,M_n)$. The vertices of $K$ are denoted by $a_i,1\leq i\leq n+1$ and the barycentric coordinates by $\lam_1,\lam_2,\cdots,\lam_{n+1}$. Let $H=\sum_{i<j}|a_i-a_j|^2$, then the basis functions are as follows
\begin{equation*}
\begin{split}
&\phi_K = \frac{n+2}{2}-\frac{n(n+1)^2(n+2)}{2H}\sum^{n}_{i=1}(x_i-M_i)^2,\\
&\phi_j=1-n\lam_j-\frac{1}{n+1}\phi_K,\quad 1\leq j\leq n+1.
\end{split}
\end{equation*}

For any $v\in V_{\rm ECR}$, by the definition of $V_{\rm ECR}$ in \eqref{ECR}, $\int_{E}[v]dE=0\text{ for all }E\in\mathcal{E}(\Omega)$ and $\int_EvdE=0\text{ for all }E\in\mathcal{E}(\partial\Omega)$. From the theory of \cite{HuMa2014}, there holds that
\begin{equation*}
  \|\nabla_{\rm NC}(u-u_{\rm ECR})\|\lesssim\|\nabla u-\Pi_0\nabla u\|+osc(f),
\end{equation*}
where $$osc(f)=\left(\sum_{K\in\mathcal{T}}h_K^2\big[\inf_{\bar{f}\in P_r(K)}\|f-\bar{f}\|^2_{0,K}\big]\right)^{1/2}$$
$r\geq 0$ is arbitrary. The convergence of the ECR element follows immediately.
\end{appendix}

\end{document}